\newtheorem{definition}{Definition}[subsection]
\newtheorem{proposition}{Proposition}[subsection]
\newtheorem{property}{Property}[subsection]
\newtheorem{remark}{Remark}[subsection]
\newcommand{\indices}[4]{
\scriptsize
\begin{array}{c}
1\leq #1\leq #2\\
1\leq #3\leq #4
\end{array}
}
\begin{document}
\title{Accurate and efficient evaluation of the a posteriori error estimator in the reduced basis method}
\maketitle

\begin{center}
{Fabien Casenave$^{1}$, Alexandre Ern$^{1}$ and Tony Leli\`{e}vre$^{1,2}$}\\
\end{center}
\begin{center}
\end{center}
\begin{center}
$^1$ Universit\'{e} Paris-Est, CERMICS, \'{E}cole des Ponts-Paristech, 6 \& 8 av Blaise Pascal, 77455 Marne-la-Vall\'{e}e Cedex 2, France
\end{center}
\begin{center}
$^2$ INRIA Rocquencourt, MICMAC Team-Project, Domaine de Voluceau, B.P. 105, 78153 Le Chesnay Cedex, France 
\end{center}

\begin{abstract}  
The reduced basis method is a model reduction technique yielding substantial savings of computational time when a solution to a
parametrized equation has to be computed for many values of the parameter.
Certification of the approximation is possible by means of an a posteriori error bound.
Under appropriate assumptions, this error bound is computed with an algorithm of
complexity independent of the size of the full problem.
In practice, the evaluation of the error bound can become very sensitive to round-off errors. We propose herein an explanation
of this fact. A first remedy has been proposed in [F. Casenave, Accurate \textit{a posteriori} error evaluation in the reduced basis method. \textit{C. R. Math. Acad. Sci. Paris} \textbf{350} (2012) 539--542.].
Herein, we improve this remedy by proposing a new approximation of the error bound using the
Empirical Interpolation Method (EIM). This method achieves higher levels of accuracy and requires potentially less
precomputations than the usual formula.
A version of the EIM stabilized with respect to round-off errors is also derived.
The method is illustrated on a simple one-dimensional diffusion problem and a
three-dimensional acoustic scattering problem solved by a boundary element method.
\end{abstract}

\textbf{1991 Mathematics Subject Classification.} {65N15, 65D05, 68W25, 76Q05.}

\textit{Keywords}. {Reduced basis method, a posteriori error estimator, round-off errors, boundary element method, empirical interpolation
method, acoustics}
%

\section*{Introduction}
In many problems, such as optimization, uncertainty propagation or real-time simulation, one has to evaluate an objective
function for a large number of values of some parameters. Evaluating this objective function often implies solving a
parametrized partial differential equation for a given parameter value.
In an industrial context, one evaluation of the objective function can already be a challenging numerical problem.
To keep reasonable computational costs, various model reduction techniques have been developed to speed up
computations.
We focus on the Reduced Basis (RB) method \cite{1, RB2}. This method has been applied to many kinds of
problems, including nonlinear problems
such as the viscous Burgers equation \cite{2} or the steady incompressible Navier-Stokes equations \cite{3}.

As described in Section \ref{sec:RB}, the RB method consists in replacing the sequence
$\mathcal{P}\ni\mu\overset{E_\mu}{\mapsto}u_\mu\mapsto{Q(u_\mu)}$ by the sequence
$\mathcal{P}\ni\mu\overset{\hat{E}_\mu}{\mapsto}\hat{u}_\mu\mapsto{\hat{Q}(\hat{u}_\mu)}$.
Here, $\mathcal{P}$ denotes the parameter set, ${E_\mu}:\mu~{\mapsto}~u_\mu$ the model problem,
${\hat{E}_\mu}:\mu~{\mapsto}~\hat{u}_\mu$ its lower-dimensional approximation,
$Q(u_\mu)$ the quantity of interest, and $\hat{Q}(\hat{u}_\mu)$ its RB approximation.
More specifically, the RB method consists in two steps: (i) A so-called offline stage, where solutions to $E_\mu$ for well-chosen
values of the parameter $\mu$ are computed. During this stage, $\hat{N}$ problems of size $N$ are
solved (with $\hat{N}\ll N$), and some quantities related to the $\hat{N}$ solutions are stored, and (ii) a so-called online stage, where the precomputed quantities are
used to solve $\hat{E}_\mu$ for many values of $\mu$. In this stage, a certification of the approximation is possible by means
of an a posteriori error bound.
An important feature in the RB method is the use of an online-efficient error bound.
The notion of online-efficiency is defined in Section \ref{sec:efficiency}.
Moreover, the error bound must be as sharp as possible to faithfully represent the error. However, as noticed for example
in \cite[pp.148-149]{paterabook}, the error bound is subject to round-off errors, especially for the computation of accurate solutions.
This difficulty can be encountered in complex industrial applications in the following two cases.
First and most importantly, when the stability constant of the underlying bilinear (or sesquilinear) form is very small,
the classical formula for
the error bound fails to certify, even at a relatively crude error level, as illustrated in Section \ref{sec:acoustics} where
the stability constant is about $10^{-6}$ and the classical error bound stagnates at about $10^{-4}$.
Second, in some industrial codes, the single-precision format is used to speed up computations, when high precision is not
needed. In this case, the classical formula for the error bound fails to deliver values below $10^{-4}$ for a stability
constant of order $1$.
The purpose of this work is an explanation of these facts and the derivation of a new method to compute the error bound in an
accurate and online-efficient way. Additionally, the new formula uses potentially less precomputed quantities than the classical
formula.

In Section \ref{sec:RB}, we briefly recall the main ingredients of the RB method, namely (i) the construction of the reduced problem,
(ii) the a posterior error bound, (iii) the notion of online-efficiency, and (iv) the offline stage during which the vectors of the
reduced basis are constructed.
We then explain in Section \ref{sec:machine_prec}
why the classical formula for computing the error bound is ill-conditioned in regard of round-off errors.
In Section \ref{sec:newest}, we present our new procedure based on the Empirical Interpolation Method (EIM).
A version of the EIM stabilized with respect to round-off errors is also derived, and the various procedures to compute the
error bound are compared on a simple one-dimensional diffusion problem.
In Section \ref{sec:acoustics}, we apply this new procedure to a three-dimensional acoustic scattering problem.

\section{The reduced basis method}
\label{sec:RB}
\subsection{The model problem}
\label{sec:model_pb}

We suppose that the problem of interest has the following discrete variational form,
depending on a parameter $\mu$ in a parameter set $\mathcal{P}$:
for a finite-dimensional space $\mathcal{V}$ of dimension $N$ (with $N\gg 1$ resulting, e.g., from discretization),
find $u_\mu\in \mathcal{V}$ such that 
\begin{equation}
E_\mu:a_\mu(u_\mu,v)=b(v),\qquad\forall v\in \mathcal{V},
\end{equation}
where $a_\mu$ is an inf-sup stable bounded sesquilinear form on $\mathcal{V}\times\mathcal{V}$
and $b$ is a
continuous linear form on $\mathcal{V}$. We work in complex vector spaces in view of our application to acoustic scattering.
In what follows, the complex conjugate of $z\in\mathbb{C}$ is denoted $z^*$.
We define the Riesz isomorphism $J$ from $\mathcal{V}'$ to $\mathcal{V}$ such that for all $l\in\mathcal{V}'$ and all $u\in\mathcal{V}$,
$\left(Jl,u\right)_{\mathcal{V}}=l(u)$, where $(\cdot,\cdot)_{\mathcal{V}}$ denotes the inner product of $\mathcal{V}$ with
associated norm $\|\cdot\|_{\mathcal{V}}$. We denote 
$\displaystyle{\beta_\mu}:=\underset{u\in\mathcal{V}}{\inf}\underset{v\in\mathcal{V}}{\sup}\frac{|a_{\mu}(u,v)|}{\|u\|_\mathcal{V}\|v\|_\mathcal{V}}>0$
the inf-sup constant of $a_\mu$ and
$\tilde{\beta}_\mu$ a computable positive lower bound of ${\beta_\mu}$. For simplicity,
we consider that the linear form $b$ is independent of the parameter $\mu$. The extension to $\mu$-dependent $b$ is straightforward.
We refer to the discrete solution $u_\mu$ as the ``truth solution''.

\subsection{The reduced problem}

Suppose that a reduced basis, consisting of $\hat{N}$ solutions $u_{\mu_i}$ of $E_{\mu_i}$, $i\in\{1,...,\hat{N}\}$,
has already been constructed. To alleviate the notation, we denote $u_i$ the function $u_{\mu_i}$.
How the parameters $\mu_i$ are chosen is briefly outlined in Section \ref{sec:greedy}.
Given a parameter value $\mu\in\mathcal{P}$,
the reduced problem is then a Galerkin procedure written on the linear space
$\hat{\mathcal{V}}=\textnormal{Span}\{u_1,...,u_{\hat{N}}\}\subset\mathcal{V}$: find $\hat{u}_\mu\in\hat{\mathcal{V}}$ such that
\begin{equation}
\hat{E}_\mu:a_\mu(\hat{u}_\mu,u_j)=b(u_j), \qquad\forall j\in\{1,...,\hat{N}\}.
\end{equation}
The approximate solution on the reduced basis is written as
\begin{equation}
\label{eq:defgamma}
\hat{u}_\mu=\sum_{i=1}^{\hat{N}}{\gamma_i(\mu)u_i}.
\end{equation}

Recalling the exact and approximate quantities of interest $Q(u_\mu)$ and $\hat{Q}(\hat{u}_\mu)$, respectively, the quality
of the approximation for a given $\mu\in\mathcal{P}$ is quantified by the error measure $\|Q(u_\mu)-\hat{Q}(\hat{u}_\mu)\|$.
When we obtain a satisfying error measure with $\hat{N}\ll N$, the RB strategy is successful.
Two main cases are generally considered: (i) the so-called general-purpose case, where one is interested in the whole solution:
$Q=\hat{Q}={\rm Id}$ and $\|\cdot\|=\|\cdot\|_{\mathcal{V}}$, and
(ii) the so-called goal-oriented case, where $Q$ is a linear form on $\mathcal{V}$ and $\|\cdot\|=|\cdot|$.
The operator $\hat{Q}$ is consistently built so that $\|Q(u_\mu)-\hat{Q}(\hat{u}_\mu)\|$ vanishes for $\mu=\mu_i$, $i\in\{1,...,\hat{N}\}$.

\subsection{A posteriori error bound}

In the standard RB method, the a posteriori error bound is a residual-based bound.
In what follows, we refer to it simply as error bound.
Since this error bound is an upper bound, it provides a way to certify the approximation made by the reduced basis.

\begin{property}[General-purpose case]
\label{prop_GP}
The following error bound holds: For all $\mu\in\mathcal{P}$,
\begin{equation}
\label{eq:E1}
\|u_\mu-\hat{u}_\mu\|_{\mathcal{V}}\leq \mathcal{E}_1(\mu):={\tilde{\beta}_\mu}^{-1}\|G_\mu \hat{u}_\mu\|_{\mathcal{V}},
\end{equation}
with $G_\mu$ the linear map from $\mathcal{V}$ to $\mathcal{V}$ such that
$\mathcal{V}\ni u\mapsto G_\mu u:=J\left(a_\mu(u,\cdot)-b\right)\in \mathcal{V}$. 
\end{property}

\begin{proof}
See \cite[Section 4.3.2]{paterabook}.
\end{proof}

In the goal-oriented case, one possible approach is to
introduce the following dual problem: Find $v_\mu\in\mathcal{V}$ such that
\begin{equation}
\label{eq:pb_dual}
E_{\mu}^d:a_\mu(w,v_\mu) = Q(w), \qquad\forall w\in\mathcal{V}.
\end{equation}
We wrote the dual problem on the same discrete space $\mathcal{V}$, but another space can be considered.
A reduced basis procedure is also carried out for the problem $E_{\mu}^d$, resulting in an approximation
$\hat{v}_\mu$ of $v_\mu$. The approximate quantity of interest is then defined as
$\hat{Q}(\hat{u}_\mu):=Q(\hat{u}_\mu)-(G_\mu \hat{u}_\mu,\hat{v}_\mu)_{\mathcal{V}}$, where the second term
is the so-called dual-based correction.

\begin{property}[Goal-oriented case]
The following error bound holds: For all $\mu\in\mathcal{P}$,
\begin{equation}
\label{eq:dual_corr}
\left|Q(u)-\hat{Q}(\hat{u}_\mu)\right|\leq \mathcal{E}_1^\textnormal{go}(\mu):=\left(\tilde{\beta}_\mu^d\right)^{-1}\|G_\mu\hat{u}_\mu\|_{\mathcal{V}}\|{G}_\mu^d \hat{v}_\mu\|_{\mathcal{V}},
\end{equation}
where $G_\mu^d$ is the linear map from $\mathcal{V}$ to $\mathcal{V}$ such that
$\mathcal{V}\ni v\mapsto {G}_\mu^d u:=J\left(a_\mu(\cdot,v)-Q\right)\in \mathcal{V}$
and $\tilde{\beta}_\mu^d$ is a computable lower bound of
$\displaystyle{\beta_\mu^d}=\underset{u\in\mathcal{V}}{\inf}\underset{v\in\mathcal{V}}{\sup}\frac{|a_{\mu}(v,u)|}{\|u\|_\mathcal{V}\|v\|_\mathcal{V}}$.
Obviously, ${\beta_\mu^d}={\beta_\mu}$ if $a_\mu$ is Hermitian.
\end{property}
\begin{proof}
See \cite[Proposition 23]{Boyaval} and \cite[Proposition 3.1]{cemracs}.
\end{proof}

In what follows, we mainly focus on the general-purpose case. Extensions to the goal-oriented case are straightforward.

\subsection{Online-efficiency of the RB method}
\label{sec:efficiency}

The notion of online-efficiency is central to the RB method.
\begin{definition}
The RB method is said to be online-efficient if in the online stage, (i) the reduced problems can be
constructed in complexity independent of $N$,
and (ii) the error bound can be computed in complexity independent of $N$.
\end{definition}
\begin{definition}
\label{def:affine_assump}
The sesquilinear form $a_\mu$ is said to depend on $\mu$ in an affine way if there exist $d$ functions $\alpha_k(\mu):\mathcal{P}\rightarrow\mathbb{C}$
and $d$ $\mu$-independent sesquilinear forms $a_k$ bounded on $\mathcal{V}\times\mathcal{V}$ such that
\begin{equation}
\label{eq:affine_assump}
a_\mu(u,v)=\sum_{k=1}^d{\alpha_k(\mu)a_k(u,v)}, \qquad \forall u,v \in\mathcal{V}.
\end{equation}
\end{definition}
In what follows, we always assume that the affine decomposition \eqref{eq:affine_assump} holds.
This decomposition is instrumental to achieve online-efficiency.
\begin{property}
If $a_\mu$ depends on $\mu$ in an affine way, then the RB method is online-efficient.
\end{property}
\begin{proof}
(i) The reduced matrix writes $(\hat{A}_\mu)_{j,i}=a_\mu(u_i,u_j)$ and the reduced right-hand side
$(\hat{B})_j=b(u_j)$, for all $1\leq i,j\leq \hat{N}$.
There holds $\hat{A}_\mu=\sum_{k=1}^d\alpha_k(\mu)\hat{A}_k$, where $(\hat{A}_k)_{ij}:=a_k(u_i,u_j)$.
Therefore, provided the $d$ matrices $\hat{A}_k$ and the vector $\hat{B}$ are precomputed during the offline stage, the reduced problems are constructed
in complexity independent of $N$.

(ii) The operator $G_\mu$ inherits the affine dependence of $a_\mu$ on $\mu$ since, for all $u\in\mathcal{V}$,
\begin{equation}
\label{eq:Gmu_decomp}
G_\mu u=-Jb+\sum_{k=1}^d\alpha_k(\mu)Ja_k(u,\cdot)=G_{00}+\sum_{k=1}^d\alpha_k(\mu) G_k u, 
\end{equation}
where $G_{00}:=-Jb\in\mathcal{V}$ and $G_k u:=Ja_k(u,\cdot)\in\mathcal{V}$ for all $k\in\{1,...,d\}$.
Using this affine decomposition and recalling \eqref{eq:defgamma}, we infer
\begin{equation}
\label{eq:est1}
\mathcal{E}_1(\mu)={\tilde{\beta}_\mu}^{-1}
\left\| G_{00}+\sum_{i=1}^{\hat{N}}\sum_{k=1}^{d}{\alpha_k(\mu)\gamma_i(\mu)}G_k u_i\right\|_{\mathcal{V}}. 
\end{equation}
The scalar product on which the norm in \eqref{eq:est1} hinges
can be expanded to provide another formula for the error bound (see \cite[eq.(4.61)]{paterabook}):
\begin{equation}
\label{eq:E2}
\begin{aligned}
\displaystyle {\mathcal{E}_2(\mu)} ={}& {\tilde{\beta}_\mu}^{-1}\left({(G_{00},G_{00})_{\mathcal{V}}}
\displaystyle +2{\rm Re}\sum_{i=1}^{\hat{N}}\sum_{k=1}^{d}{\gamma_i(\mu)\alpha_k(\mu){(G_k u_i,G_{00})_{\mathcal{V}}}}\right.\\
\displaystyle &\left.+\sum_{i,j=1}^{\hat{N}}\sum_{k,l=1}^{d}{\gamma_i(\mu)\alpha_k(\mu)\gamma_j^*(\mu)\alpha_l^*(\mu){(G_k u_i,G_l u_j)_{\mathcal{V}}}}\right)^{\frac{1}{2}},
\end{aligned}
\end{equation}
which is computed in complexity independent of $N$ in the online stage provided that $(G_{00},G_{00})_{\mathcal{V}}$, $(G_k u_i,G_{00})_{\mathcal{V}}$
and $(G_k u_i,G_l u_j)_{\mathcal{V}}$ are precomputed during the offline stage, and provided that a
lower bound ${\tilde{\beta}_\mu}$ of the stability constant
of $a_\mu$ is also computed in complexity independent of $N$ (which is possible, for example, by the Successive Constraint
Method, see \cite{Huynh, Chen}).
\end{proof}

An important observation made in \cite{casenave}, and that will be useful below, is that
the formula \eqref{eq:E2} defining $\mathcal{E}_2$ can be rewritten in an equivalent way as
\begin{equation}
\label{eq:est2}
\mathcal{E}_2(\mu):={\tilde{\beta}_\mu}^{-1}\left(\delta^2 + 2{\rm Re} (s^t \hat{x}_\mu) + {\hat{x}_\mu}^{*t} S\hat{x}_\mu\right)^{\frac{1}{2}},
\end{equation}
where $\delta:=\|G_{00}\|_{\mathcal{V}}$, $s$ and $\hat{x}_\mu$ are vectors in $\mathbb{C}^{d\hat{N}}$ with components
$s_I:=(G_k u_i,G_{00})_{\mathcal{V}}$ and $(\hat{x}_{\mu})_I:=
\alpha_k(\mu)\gamma_i(\mu)$, and $S$ is a matrix in $\mathbb{C}^{d\hat{N},d\hat{N}}$ with coefficients
$S_{I,J}:=(G_k u_i,G_l u_j)_{\mathcal{V}}$ (with $I$ and $J$ re-indexing respectively $(k,i)$ and $(l,j)$,
for all $1\leq k,l\leq d$ and all $1\leq i,j\leq\hat{N}$). The $t$ superscript denotes the transposition. The vector $s$ and the matrix $S$
depend on the reduced basis functions $\{u_i\}_{1\leq i\leq\hat{N}}$ but are independent of $\mu$,
and the vector $\hat{x}_\mu$ depends on the RB approximation $\hat{u}_\mu$ via the coefficients $\gamma_i(\mu)$.
Notice that the term between parenthesis on the right-hand side of \eqref{eq:est2} is a multivariate polynomial in $\hat{x}_{\mu}$
of total degree $2$.
We would like to stress that $\mathcal{E}_1(\mu)=\mathcal{E}_2(\mu)$ (in infinite precision arithmetic): the indices $1$ and $2$ are used to
denote two different ways to compute the same quantity. In particular, $\mathcal{E}_1(\mu)$ is not online efficient, while $\mathcal{E}_2(\mu)$
is.

\subsection{The offline stage}
\label{sec:greedy}

Fix a discrete subset of parameters $\mathcal{P}_{\rm trial}\subset\mathcal{P}$.
In the offline stage, the parameters $\mu_i$ (from which the reduced basis is constructed)
are chosen by a greedy algorithm as elements of $\mathcal{P}_{\rm trial}$.
We denote $\mathcal{P}_{\rm select}$ the set of these selected parameters;
see \cite[Section 3.3]{paterabook} for a presentation of the greedy algorithm.
At each step of the algorithm, the new quantities $a_k(u_i,u_j)$ and $b(u_j)$ are computed and stored,
as well as the new components of the vector $s$ and of the matrix $S$ to be used in the formula \eqref{eq:est2} for $\mathcal{E}_2$.
This task, as that of evaluating $G_{00}$, typically requires inverting the stiffness matrix in $\mathcal{V}$ by solving, for
all $k\in\{1,...,d\}$ and all $i\in\{1,...,\hat{N}\}$, the variational problem: find $w_{i,k}\in\mathcal{V}$ such that
\begin{equation}
\label{eq:compute_G}
{E_G}_{i,k}:(w_{i,k},v)_{\mathcal{V}}=a_k(u_i,v),\qquad\forall v\in\mathcal{V}.
\end{equation}
Then, $G_k u_i = w_{i,k}$ can be computed.
The computation of $(G_k u_i,G_l u_j)_{\mathcal{V}}$ follows from the solutions of ${E_G}_{i,k}$ and ${E_G}_{j,l}$.
Since the error bounds are evaluated using the formula $\mathcal{E}_{2}(\mu)$, for all $\mu\in\mathcal{P}_{\rm trial}$,
with the current state of the reduced basis, finding the maximum of the error bound on
$\mathcal{P}_{\rm trial}$ is of complexity independent of $N$. This allows one to consider very large sets $\mathcal{P}_{\rm trial}$
without increasing too much the complexity of the whole offline procedure.

\section{Round-off errors and online certification}
\label{sec:machine_prec}
In this section, we explain why the online-efficient error bound \eqref{eq:est2} may be sensitive to round-off errors.

\subsection{Elements of floating-point arithmetic}
\label{sec:intro_precision}

In a computer, real numbers are represented by a finite number of bits, called floating-point representation.
Current architectures are optimized for a format
used by a large majority of softwares: IEEE 754 double-precision binary floating-point format.
Let $x$ be a real number. The floating point representation of $x$ is denoted by $fl(x)$.
When a (nonzero) real number is rounded to the closest floating-point number, the relative error on its floating-point
representation is bounded by a number, $\epsilon$, called the machine precision.
In double precision, $\epsilon=5\times 10^{-16}$ (see \cite[Section 1.2]{Goldberg91}).
Let $x$ and $y$ be real numbers. When computing the operation $x+y$, the result returned by the computer can be different
from its theoretical value. Whenever the difference is substantial, a loss of significance occurs.
A well-known case of loss of significance is when $x$ and $y$ are almost opposite numbers. Suppose that $x=-y$.
We denote by ${\rm maxfl}(x+y)$ the result that the computer returns
when the maximal accumulation of round-off errors occurs when computing the summation. There holds
\begin{equation}
\label{eq:maxfl}
|{\rm maxfl}(x+y)|\approx 2\epsilon|x|.
\end{equation}

When implementing an algorithm, one should ensure that each step is free of such a loss of significance.
In some cases, simply changing the order of the operations can prevent these situations.
As an illustration, consider $x=1$, $y=1+10^{-7}$, and the operation $x^2-2xy+y^2$. This is a sum of terms where the first
intermediate result in the sum is $14$ orders larger than the result. Therefore, a loss of significance is expected.
The relative error of this computation is about $8\times 10^{-4}$. Computing $(x-y)^2$, which is the
factorization of the considered operation, leads to a relative error of about $10^{-9}$. Thus, the terms of the sum
are only $7$ orders larger than the results, leading to a less catastrophic loss of significance. In this specific case,
the remedy consists in carrying out the sum before the multiplication.
In the RB context, the evaluation of the formula $\mathcal{E}_2$ suffers from such a loss of significance,
as we now explain.

\subsection{Validity of the formulae $\mathcal{E}_1$ and $\mathcal{E}_2$ for computing the error bound}

Consider the two formulae $\mathcal{E}_1$, see \eqref{eq:est1}, and $\mathcal{E}_2$, see \eqref{eq:est2}, for computing the error bound.
\begin{definition}
\label{def_val}
The formula $\mathcal{E}_k$, $k=1,2$, is said to be valid for computing the error bound with tolerance ${\rm tol}$ if
\begin{equation}
\underset{\mu\in\mathcal{P}_{\rm select}}{\max}(\mathcal{E}_k(\mu))\leq\textnormal{tol}.
\end{equation}
\end{definition}

From a theoretical viewpoint, the error $\|u_\mu-\hat{u}_\mu\|_{\mathcal{V}}$ and the residual $G_{\mu}u_{\mu}$ vanish for all
$\mu\in\mathcal{P}_{\rm select}$.
Hence, any formula for computing the residual-based error bound vanishes as well and therefore is valid with any tolerance.
However, the validity of a formula for computing the error bound is to be considered
in the presence of some adverse phenomenon introducing errors in the computation, see Figure \ref{fig:def_valid}.
The greedy algorithm in the offline stage stops when
$\underset{\mu\in\mathcal{P}_{\rm trial}}{\max}(\mathcal{E}_k(\mu))<{\rm tol_{RB}}$, where ${\rm tol_{RB}}$ denotes the
maximum acceptable error made by the RB approximation.
Therefore, if the minimum tolerance for which an error bound $\mathcal{E}_k$ is valid is larger than ${\rm tol_{RB}}$,
then the greedy algorithm cannot converge and will keep increasing the set $\mathcal{P}_{\rm select}$ although the error can be actually
very small.

\begin{figure}[h!]
\centerline{
\includegraphics [width=8cm] {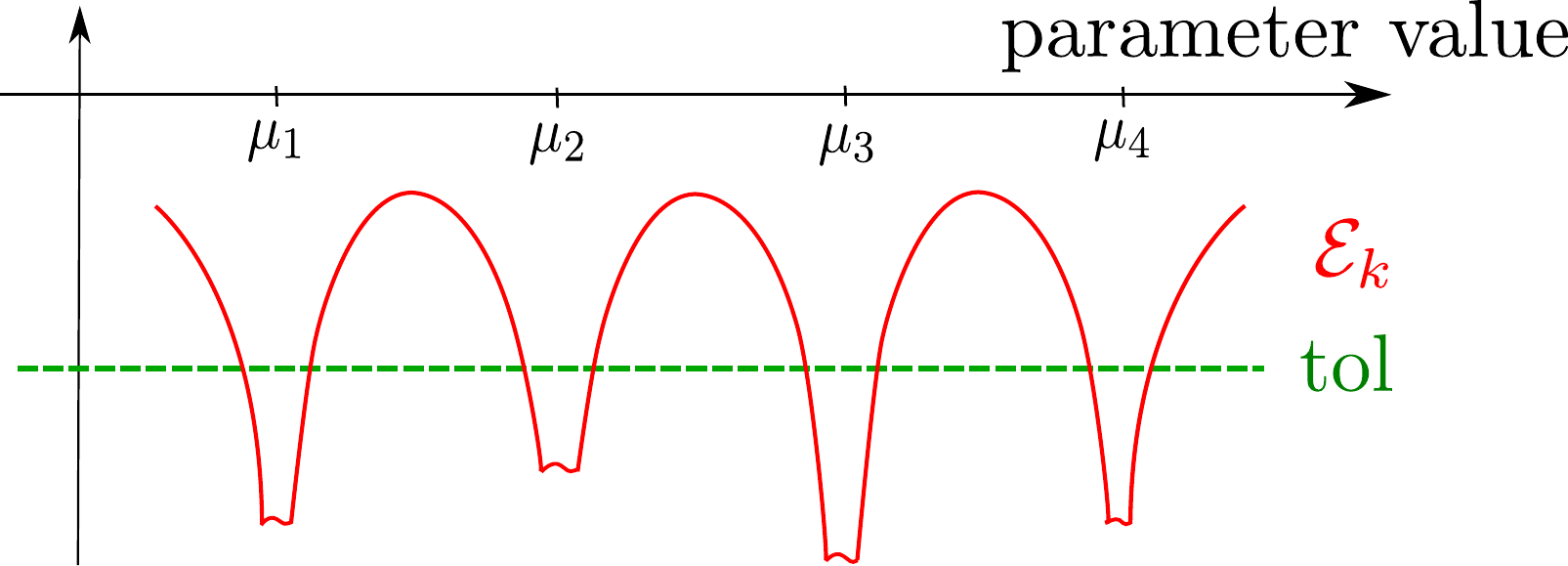}
\includegraphics [width=8cm] {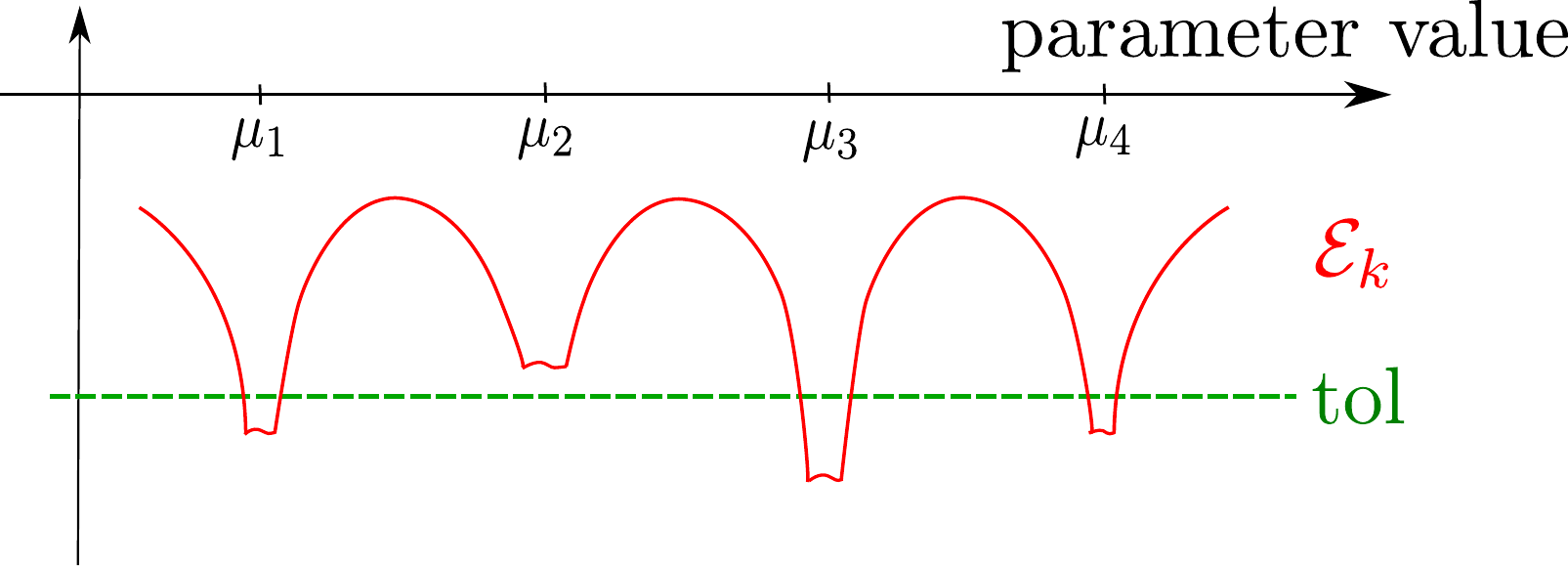}}
\caption{Schematic illustration of Definition \ref{def_val},
with $\mathcal{P}_{\rm select}=\{\mu_1,...,\mu_4\}$. Left: the formula $\mathcal{E}_k$ is valid for computing
the error bound with tolerance $\rm{tol}$ ; right: the formula is not valid as $\mathcal{E}_k(\mu_2)>{\rm tol}$.}
\label{fig:def_valid}
\end{figure}

We examine the validity of the formulae $\mathcal{E}_1$ and $\mathcal{E}_2$ for computing the error bound in the
presence of two independent phenomena: round-off errors and approximate reduced basis functions $u_i$ (in the context of inexact
linear algebra solvers for $E_{\mu_i}$).

\subsubsection{Round-off errors}

We investigate the influence of round-off errors when computing the error bounds $\mathcal{E}_1(\mu)$
and $\mathcal{E}_2(\mu)$. As observed at the end of Section \ref{sec:intro_precision},
the computation of a polynomial using a factorized form is more accurate than using the developed form, in particular at points
close to its roots. Here, $\left({{\tilde{\beta}_\mu}}\mathcal{E}_2(\mu)\right)^2$ is a multivariate polynomial of degree
$2$ in ${\hat{x}_\mu}$ computed in a developed form, whereas the scalar product $(G_\mu u_\mu,G_\mu u_\mu)_{\mathcal{V}}$
used in the computation of $\mathcal{E}_1(\mu)$ is not developed.

In this section, we neglect the round-off errors introduced when solving $E_\mu$ and $\hat{E}_\mu$, so that
the reduced basis functions $u_i$ and the reduced solutions $\hat{u}_\mu$ are considered free of round-off errors.
We also suppose that the computable positive lower bound $\tilde{\beta}_\mu$ of the inf-sup constant
is computed free of round-off errors, see Remark \ref{remarkinfsupcte}.

\begin{proposition}
Let $\mu\in\mathcal{P}_{\rm select}$ and let $\rm{maxfl}(\tilde{\beta}_\mu \mathcal{E}_k(\mu))$, $k=1,2$,
denote the evaluation of $\tilde{\beta}_\mu \mathcal{E}_k(\mu)$ when the maximum accumulation of round-off errors occurs. There holds
\begin{equation}
\label{eq:lower_bounds}
\begin{aligned}
&\rm{maxfl}(\tilde{\beta}_\mu \mathcal{E}_1(\mu))\geq 2\delta\epsilon,\\
&\rm{maxfl}(\tilde{\beta}_\mu\mathcal{E}_2(\mu))\geq 2\delta\sqrt{\epsilon},
\end{aligned}
\end{equation}
where $\delta=\|G_{00}\|_{\mathcal{V}}$ and $\epsilon$ is the machine precision.
\end{proposition}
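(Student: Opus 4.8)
The plan is to exploit the fact that for $\mu\in\mathcal{P}_{\rm select}$ the residual $G_\mu\hat{u}_\mu$ vanishes exactly (there $\hat{u}_\mu=u_\mu$), so both formulae evaluate a quantity that is theoretically zero but is reached through a cancellation; I would then quantify the unavoidable round-off of that cancellation using the elementary estimate \eqref{eq:maxfl}, read in the form: the maximal round-off of a finite sum that cancels to zero is of order $\epsilon$ times the sum of the moduli of its terms (the displayed two-term case being $\epsilon\cdot 2|x|$). First I would use the affine decomposition \eqref{eq:Gmu_decomp} to write the vector entering $\mathcal{E}_1$ as $G_\mu\hat{u}_\mu=G_{00}+v$, where $v:=\sum_{i=1}^{\hat{N}}\sum_{k=1}^{d}\alpha_k(\mu)\gamma_i(\mu)G_k u_i$, and record the key identity $v=-G_{00}$, so that $\|v\|_{\mathcal{V}}=\|G_{00}\|_{\mathcal{V}}=\delta$.

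For the bound on $\mathcal{E}_1$, I would isolate the final operation in the evaluation of $\tilde{\beta}_\mu\mathcal{E}_1(\mu)=\|G_{00}+v\|_{\mathcal{V}}$, namely the formation of the vector $G_{00}+v$, which is an addition of two vectors of norm $\delta$ that are exactly opposite. Applying the reading of \eqref{eq:maxfl} componentwise, each component of the computed sum carries an error of order $2\epsilon$ times the modulus of the corresponding component of $G_{00}$; taking the Euclidean norm of these componentwise errors yields a computed vector of norm at least $2\epsilon\delta$, whence ${\rm maxfl}(\tilde{\beta}_\mu\mathcal{E}_1(\mu))\geq 2\delta\epsilon$. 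Since this contribution comes from the final cancellation alone and the maximal accumulation can only be larger, the inequality (rather than an equality) is justified.

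For the bound on $\mathcal{E}_2$ the point is that the cancellation now occurs at the level of the squared quantity. Using $v=-G_{00}$ together with the definitions of $s$ and $S$, I would check that the three summands inside the parentheses in \eqref{eq:est2} equal respectively $(G_{00},G_{00})_{\mathcal{V}}=\delta^2$, $2{\rm Re}(s^t\hat{x}_\mu)=2{\rm Re}\,(v,G_{00})_{\mathcal{V}}=-2\delta^2$ and $\hat{x}_\mu^{*t}S\hat{x}_\mu=\|v\|_{\mathcal{V}}^2=\delta^2$, so that the radicand is a sum cancelling to zero whose moduli sum to $\delta^2+2\delta^2+\delta^2=4\delta^2$. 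By the same reading of \eqref{eq:maxfl}, the maximal accumulation of round-off in this summation is of order $4\delta^2\epsilon$; since the square root is monotone, ${\rm maxfl}(\tilde{\beta}_\mu\mathcal{E}_2(\mu))\geq\sqrt{4\delta^2\epsilon}=2\delta\sqrt{\epsilon}$.

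The main obstacle I anticipate is not any single computation but the careful bookkeeping needed to turn these estimates into genuine \emph{lower} bounds on the worst case: one must argue that it suffices to retain only the final cancellation step and to discard the round-off incurred while forming $v$, $s$ and $S$ (which can only increase the accumulated error), and one must track the factors $2$ and $4$ through \eqref{eq:maxfl}. The conceptually important feature the proof should make transparent is the contrast between $\epsilon$ and $\sqrt{\epsilon}$: $\mathcal{E}_1$ cancels at the scale $\delta$ of the residual vector itself, whereas $\mathcal{E}_2$ cancels at the scale $\delta^2$ of its squared norm, and the subsequent square root turns an $O(\epsilon\delta^2)$ error into an $O(\sqrt{\epsilon}\,\delta)$ one, which is vastly larger.
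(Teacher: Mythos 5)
Your proposal is correct and takes essentially the same route as the paper: exploit the exact cancellation for $\mu\in\mathcal{P}_{\rm select}$, apply the estimate \eqref{eq:maxfl} to the final cancelling sum, and note that for $\mathcal{E}_2$ the cancellation happens in the radicand at scale $\delta^2$, so the square root turns an $O(\epsilon\delta^2)$ error into $2\delta\sqrt{\epsilon}$. The only difference is one of detail: the paper argues componentwise in the basis of $\mathcal{V}$ for $\mathcal{E}_1$ and dismisses $\mathcal{E}_2$ as ``similar,'' whereas you write out explicitly the three-term cancellation $\delta^2-2\delta^2+\delta^2$ whose moduli sum to $4\delta^2$, which is exactly the computation the paper leaves to the reader.
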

\begin{proof}
Let $\mu\in\mathcal{P}_{\rm select}$. We present the proof for $\mathcal{E}_1(\mu)$; the proof for $\mathcal{E}_2(\mu)$
is similar. We need to evaluate the right-hand side of \eqref{eq:est1}.
Let $(\varphi_\rho)_{1\leq \rho\leq N}$ denote the basis of $\mathcal{V}$, so that, for instance,
$G_{00}=\sum_{\rho=1}^N \left(G_{00}\right)_{\rho}\varphi_{\rho}$.
In exact arithmetics, there holds $\mathcal{E}_1(\mu)=0$, so that 
$\sum_{i=1}^{\hat{N}}\sum_{k=1}^d\gamma_i(\mu)\alpha_k({\mu}) \left(G_k u_i\right)_{\rho}=-\left(G_{00}\right)_{\rho}$
for all $1\leq \rho\leq N$. As a result, using \eqref{eq:maxfl}, we obtain
\begin{equation*}
\left|\rm{maxfl}\left((G_{00})_{\rho}+\sum_{i=1}^{\hat{N}}\sum_{k=1}^d\gamma_i(\mu)\alpha_k({\mu}) (G_k u_i)_{\rho}\right)\right|
\approx 2|(G_{00})_{\rho}|\epsilon.
\end{equation*}
Since computing the $\mathcal{V}$-norm on the right-hand side of \eqref{eq:est1} can only increase the round-off errors,
we infer the desired lower bound.
\end{proof}

\begin{remark}[Validity of the formulae $\mathcal{E}_1$ and $\mathcal{E}_2$]
\label{direct_case}
We indeed observe in our simulations that the round-off errors on $\mathcal{E}_1$ scale like $\epsilon$,
while the round-off errors on $\mathcal{E}_2$ scale like $\sqrt{\epsilon}$ (see Section \ref{sec:numerics}).
Then, if we suppose that the lower bounds are reached in \eqref{eq:lower_bounds},
the formulae $\mathcal{E}_1$ and $\mathcal{E}_2$ are valid for computing the error bound with
tolerance $\rm{tol}$ if, respectively,
\begin{equation}
\begin{alignedat}{2}
&\textnormal{for }\mathcal{E}_1,&\qquad 2\left(\tilde{\beta}_{\rm min}\right)^{-1}\delta\epsilon&\leq\textnormal{tol},\\
&\textnormal{for }\mathcal{E}_2,&\qquad 2\left(\tilde{\beta}_{\rm min}\right)^{-1}\delta\sqrt{\epsilon}&\leq\textnormal{tol},
\end{alignedat}
\end{equation}
where $\tilde{\beta}_{\rm min}=\underset{\mu\in\mathcal{P}_{\rm select}}{\inf}(\tilde{\beta}_{\mu})$.
\end{remark}

\begin{remark}[Inf-sup constant]
\label{remarkinfsupcte}
The computable positive lower bound $\tilde{\beta}_\mu$ of the inf-sup constant suffers from round-off errors as well.
However, since it is a multiplicative factor, the quality of its computation does not severely affect
the quality of the error bound.
Moreover, the value of the inf-sup constant does not depend on the size of the reduced basis, contrary to
$\|G_\mu \hat{u}_\mu\|_{\mathcal{V}}$.
Therefore, there is no phenomenon susceptible to degrade the accuracy of its computation with the increase of the size
of the reduced basis.
If the Successive Constraint Method is used,
the procedure to compute $\tilde{\beta}_\mu$ is carried out before the greedy algorithm of the RB method.
\end{remark}

\begin{remark}[Improved floating-point arithmetic]
Increasing the machine precision from $\epsilon$ to $\epsilon^2$ (quadruple-precision) for computing the coefficients
in \eqref{eq:est2}, as well as for the evaluation of the multivariate polynomial in ${\hat{x}_\mu}$, is
a first solution to recover a good precision with the formula $\mathcal{E}_2$.
There are also methods allowing one to double the precision of the evaluation of a polynomial while keeping the double-precision format,
namely compensated schemes. For instance, the compensated Horner scheme in double-precision \cite{Langlois}
doubles the precision and is faster than the full quadruple precision implementation. However, this corresponds to representing
the result of the intermediate operations by two doubles, one for the value in double-precision and another one for the
subsequent digits. Therefore, these strategies are equivalent to quadruple precision
(except for the computational savings in evaluating the error bound). Moreover, since
current architectures are optimized for the double-precision format, changing the floating-point arithmetic can potentially
degrade software performance.
\end{remark}

\begin{remark}[Goal-oriented case, round-off errors]
The same analysis can be carried-out in the goal-oriented case. Let $\mu\in\mathcal{P}_{\rm select}$.
There holds
\begin{equation}
\label{eq:lower_bounds2}
\begin{aligned}
&\textnormal{maxfl}(\tilde{\beta}_{\mu}^d\mathcal{E}_1^{\rm go}(\mu))\geq 2\delta\varsigma\epsilon^2,\\
&\textnormal{maxfl}(\tilde{\beta}_{\mu}^d\mathcal{E}_2^{\rm go}(\mu))\geq 2\delta\varsigma\epsilon,
\end{aligned}
\end{equation}
where $\varsigma:=\|Q\|_{\mathcal{V}'}$.
We indeed observe in our simulations that the round-off errors on $\mathcal{E}_1^{\rm go}$ scale
like $\epsilon^2$, while the round-off errors on $\mathcal{E}_2^{\rm go}$ scale like $\epsilon$ (see Section \ref{sec:acoustics}).
If we suppose that the lower bounds are reached in \eqref{eq:lower_bounds2}, then
the formulae $\mathcal{E}_1^{\rm go}$ and $\mathcal{E}_2^{\rm go}$ are valid for computing the error bound
with tolerance $\rm{tol}$ if, respectively,
\begin{equation}
\begin{alignedat}{2}
&\textnormal{for }\mathcal{E}_1^{\rm go},&\qquad 2\left(\tilde{\beta}_{\rm min}^d\right)^{-1}\delta\varsigma\epsilon^2&\leq\textnormal{tol},\\
&\textnormal{for }\mathcal{E}_2^{\rm go},&\qquad 2\left(\tilde{\beta}_{\rm min}^d\right)^{-1}\delta\varsigma\epsilon&\leq\textnormal{tol},
\end{alignedat}
\end{equation}
where $\tilde{\beta}_{\rm min}^d=\underset{\mu\in\mathcal{P}_{\rm select}}{\inf}(\tilde{\beta}_{\mu}^d)$.
\end{remark}

\subsubsection{Approximate reduced basis functions}

In large-scale simulations, the accuracy of the RB procedure is also limited by the numerical method used for computing
the reduced basis functions. We want here to illustrate this fact on a simple example where we suppose
that the approximation of the reduced basis functions comes from an iterative solver
with prescribed stopping criterion.
We recall that for a given value $\mu\in\mathcal{P}_{\rm select}$, $E_\mu$ consists in solving a linear system of size $N$ 
of the form $A_\mu U_\mu = B$. Thus, for $\mu\in\mathcal{P}_{\rm trial}$, the formulae $\mathcal{E}_1$
and $\mathcal{E}_2$ for the
error bound are based on the computation of the residual of $E_\mu$ for the reduced solution $\hat{u}_\mu$. Indeed, it is
easy to see that
$\|G_\mu \hat{u}_\mu\|_{\mathcal{V}}=\|A_\mu \hat{U}_\mu - B\|_{*\mathcal{V}'}$, where for all
$\Phi\in\mathbb{C}^{N}$, $\|\Phi\|_{*\mathcal{V}'}=\underset{V\in\mathbb{C}^{N}}{\sup}\frac{\left|(V,\Phi)_{\mathbb{C}^{N}}\right|}
{\|\sum_{i=1}^{N}V_i\varphi_i\|_{\mathcal{V}}}$, recalling that $(\varphi_\rho)_{1\leq \rho\leq N}$ are
the basis functions in $\mathcal{V}$, see \cite[\S 9.1.5]{ern}.

In this section, we suppose that the formulae
$\mathcal{E}_1$ and $\mathcal{E}_2$ are free of round-off errors (therefore, for all $\mu\in\mathcal{P}_{\rm trial}$,
$\mathcal{E}_1(\mu)=\mathcal{E}_2(\mu)$), but the problem $E_\mu$ is not solved exactly, leading to
approximate reduced basis functions such that the residuals do not vanish.
Hence, for all $\mu\in\mathcal{P}_{\rm select}$, $\mathcal{E}_{1}(\mu)=\mathcal{E}_{2}(\mu)$ and these error bounds are
nonzero owing to inexact linear algebra solves.
The reduced problems $\hat{E}_\mu$ are supposed to be solved freely of round-off errors.

\begin{proposition}[Approximate reduced basis functions]
If the reduced basis functions are computed using an iterative solver with the following stopping criterion on the normalized residual:
\begin{equation}
\label{eq:stopcrit0}
\forall\mu\in\mathcal{P}_{\rm trial},\qquad\frac{\|A_{\mu}U_\mu-B\|_{*\mathcal{V}'}}{\|B\|_{*\mathcal{V}'}}\leq \xi,
\end{equation}
then the formulae $\mathcal{E}_1$ and $\mathcal{E}_2$ are valid for computing the error bound
with tolerance $\rm{tol}$ if
\begin{equation}
\tilde{\beta}_{\rm min}^{-1}\delta\xi\leq\textnormal{tol}.
\end{equation}
\end{proposition}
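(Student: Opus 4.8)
The plan is to show that, at every selected parameter, the error bound collapses to the residual norm of the inexactly computed snapshot, which the stopping criterion \eqref{eq:stopcrit0} controls by $\xi\delta$. First I would record two identities that turn the hypothesis into a bound on $\delta$. Since $G_{00}=-Jb$ and the Riesz map $J$ is an isometry from $\mathcal{V}'$ onto $\mathcal{V}$, we have $\delta=\|G_{00}\|_{\mathcal{V}}=\|b\|_{\mathcal{V}'}$; expressing $b$ in the basis $(\varphi_\rho)_{1\le\rho\le N}$ identifies this with the discrete dual norm of the full right-hand side, $\|b\|_{\mathcal{V}'}=\|B\|_{*\mathcal{V}'}$, so that $\delta=\|B\|_{*\mathcal{V}'}$. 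The stopping criterion \eqref{eq:stopcrit0} then reads $\|A_\mu U_\mu-B\|_{*\mathcal{V}'}\le\xi\delta$ for all $\mu\in\mathcal{P}_{\rm trial}$, in particular for the snapshot parameters.

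Next, I would fix $\mu\in\mathcal{P}_{\rm select}$, say $\mu=\mu_i$, and evaluate \eqref{eq:E1} using the identity $\|G_\mu\hat{u}_\mu\|_{\mathcal{V}}=\|A_\mu\hat{U}_\mu-B\|_{*\mathcal{V}'}$ recalled in this subsection, which gives $\mathcal{E}_1(\mu_i)=\tilde{\beta}_{\mu_i}^{-1}\|A_{\mu_i}\hat{U}_{\mu_i}-B\|_{*\mathcal{V}'}$. The key step is the reproduction property: at a selected parameter the reduced solution coincides with the stored reduced basis function, $\hat{u}_{\mu_i}=u_i$, so that the residual entering the bound is exactly the residual of the inexactly computed snapshot solving $E_{\mu_i}$. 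This is precisely what makes $\mathcal{E}_1(\mu_i)$ nonzero yet small. Granting it, $\mathcal{E}_1(\mu_i)=\tilde{\beta}_{\mu_i}^{-1}\|A_{\mu_i}U_i-B\|_{*\mathcal{V}'}\le\tilde{\beta}_{\mu_i}^{-1}\xi\delta$ by the previous step.

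I would then take the maximum over $\mathcal{P}_{\rm select}$: bounding $\tilde{\beta}_{\mu_i}^{-1}\le\tilde{\beta}_{\rm min}^{-1}$ yields $\max_{\mu\in\mathcal{P}_{\rm select}}\mathcal{E}_1(\mu)\le\tilde{\beta}_{\rm min}^{-1}\delta\xi$, which is $\le\textnormal{tol}$ by hypothesis. Since in this subsection $\mathcal{E}_1(\mu)=\mathcal{E}_2(\mu)$ for all $\mu\in\mathcal{P}_{\rm trial}$, the same bound holds for $\mathcal{E}_2$, and both formulae are valid in the sense of Definition \ref{def_val}.

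The step I expect to require the most care is the reproduction identity $\hat{u}_{\mu_i}=u_i$. It is immediate for exact snapshots, since $u_i$ then solves both $E_{\mu_i}$ and $\hat{E}_{\mu_i}$; for an inexact snapshot, $a_{\mu_i}(u_i,u_j)=b(u_j)$ fails, and the Galerkin reduced solution differs from $u_i$ by a correction in $\hat{\mathcal{V}}$ driven by the snapshot residual tested against the reduced space. To preserve the clean constant $\tilde{\beta}_{\rm min}^{-1}\delta\xi$, I would argue to leading order in $\xi$, consistent with the round-off-free setting assumed here, or, for a fully rigorous statement, absorb the correction at the cost of an extra stability factor (the ratio of the continuity constant of $a_{\mu_i}$ to its reduced inf-sup constant). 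Everything else reduces to bookkeeping with the two norm identities above.
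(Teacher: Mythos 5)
Your proposal is correct and follows essentially the same route as the paper's proof: the same two dual-norm identities ($\delta=\|B\|_{*\mathcal{V}'}$ and $\|G_\mu\hat{u}_\mu\|_{\mathcal{V}}=\|A_\mu\hat{U}_\mu-B\|_{*\mathcal{V}'}$), the same reproduction property $\hat{u}_{\mu}=u_{\mu}$ at selected parameters, and the same chain of inequalities ending in $\tilde{\beta}_{\rm min}^{-1}\delta\xi\leq\textnormal{tol}$. The one step you flag as delicate---that $\hat{u}_{\mu_i}=u_i$ can fail for inexact snapshots, since the Galerkin reduced solution picks up a correction driven by the snapshot residual tested against $\hat{\mathcal{V}}$---is simply asserted without justification in the paper's proof, so your closing discussion (leading order in $\xi$, or absorbing the correction at the cost of a continuity-to-reduced-inf-sup factor) is, if anything, more careful than the paper's own treatment.
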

\begin{proof}
Let $k\in\{1,2\}$, let $\mu\in\mathcal{P}_{\rm select}$ and suppose that the stopping criterion \eqref{eq:stopcrit0} is
satisfied. Then, $\hat{u}_{\mu}={u}_{\mu}$, but ${u}_{\mu}$ does not exactly solve $E_{\mu}$.
First, by definition of the $\|\cdot\|_{*\mathcal{V}}$ norm, $\|B\|_{*\mathcal{V}'}=\underset{V\in\mathbb{C}^{N}}{\sup}\frac{\left|b(\sum_{i=1}^{N}V_i\varphi_i)\right|}
{\|\sum_{i=1}^{N}V_i\varphi_i\|_{\mathcal{V}}}=\|b\|_{\mathcal{V}'}=\|G_{00}\|_{\mathcal{V}}=\delta$. Then,
$\|G_\mu \hat{u}_\mu\|_{\mathcal{V}}=\underset{v\in\mathcal{V}}{\sup}\frac{(G_\mu \hat{u}_\mu,v)_{\mathcal{V}}}{\|v\|_{\mathcal{V}}}
=\underset{v\in\mathcal{V}}{\sup}\frac{a_\mu(\hat{u}_\mu,v)-b(v)}{\|v\|_{\mathcal{V}}}=\underset{V\in\mathbb{C}^{N}}{\sup}\frac{(V,A_\mu \hat{U}_\mu-B)_{\mathbb{C}^{N}}}
{\|\sum_{i=1}^{N}V_i\varphi_i\|_{\mathcal{V}}}=\|A_{\mu}\hat{U}_\mu-B\|_{*\mathcal{V}'}$.
Therefore, 
\begin{equation*}
\mathcal{E}_k(\mu)
=\tilde{\beta}_{\mu}^{-1}\|G_\mu \hat{u}_\mu\|_{\mathcal{V}}=\tilde{\beta}_{\mu}^{-1}\|A_{\mu}\hat{U}_\mu-B\|_{*\mathcal{V}'}=\tilde{\beta}_{\mu}^{-1}\|A_{\mu}U_\mu-B\|_{*\mathcal{V}'}\leq
\tilde{\beta}_{\mu}^{-1}\|B\|_{*\mathcal{V}'}\xi=\tilde{\beta}_{\mu}^{-1}\delta\xi\leq\tilde{\beta}_{\rm min}^{-1}\delta\xi.
\end{equation*}
Hence, if $\tilde{\beta}_{\rm min}^{-1}\delta\xi\leq \rm{tol}$, the validity of $\mathcal{E}_1$ and $\mathcal{E}_2$ follows
from Definition \ref{def_val}.
\end{proof}

Since the $\|\cdot\|_{*\mathcal{V}'}$ norm is hard to compute, the stopping criterion \eqref{eq:stopcrit0} uses
in practice the Hermitian norm in $\mathbb{C}^N$ or the $\mathcal{V}$-norm of the corresponding functions in $\mathcal{V}$.

\begin{remark}[Goal-oriented case, approximate reduced basis functions]
The formulae $\mathcal{E}_1^{\rm go}$ and $\mathcal{E}_2^{\rm go}$ are valid for computing the error bound with tolerance
$\rm{tol}$ if $\left(\tilde{\beta}_{\rm min}^d\right)^{-1}\delta\gamma\xi^2\leq\textnormal{tol}$.
\end{remark}

\subsubsection{Synthesis}

Taking into account the round-off errors in the computation of the error bound and the stopping criterion
of an iterative solver, and supposing that the bounds \eqref{eq:lower_bounds} and \eqref{eq:lower_bounds2} are reached,
the formulae $\mathcal{E}_1$ and $\mathcal{E}_2$ are valid
for computing the error bound with tolerance $\rm{tol}$ if, respectively,
\begin{equation}
\begin{aligned}
&\textnormal{for }\mathcal{E}_1,\qquad2\tilde{\beta}_{\rm min}^{-1}\delta\max\left(\xi,\epsilon\right)\leq\textnormal{tol},\\
&\textnormal{for }\mathcal{E}_2,\qquad2\tilde{\beta}_{\rm min}^{-1}\delta\max\left(\xi,\sqrt{\epsilon}\right)\leq\textnormal{tol},
\end{aligned}
\end{equation}
and the formulae $\mathcal{E}_1^{\rm go}$ and $\mathcal{E}_2^{\rm go}$ are valid
for computing the error bound with tolerance $\rm{tol}$ if, respectively,
\begin{equation}
\begin{aligned}
&\textnormal{for }\mathcal{E}_1^{\rm go},\qquad2\left(\tilde{\beta}_{\rm min}^d\right)^{-1}\delta\gamma\max\left(\xi^2,\epsilon^2\right)\leq\textnormal{tol},\\
&\textnormal{for }\mathcal{E}_2^{\rm go},\qquad2\left(\tilde{\beta}_{\rm min}^d\right)^{-1}\delta\gamma\max\left(\xi^2,\epsilon\right)\leq\textnormal{tol}.
\end{aligned}
\end{equation}

Focusing on round-off errors, the formula $\mathcal{E}_1$ for computing the error bound is valid for tolerances scaling as $\epsilon$, but is not
online-efficient, whereas the formula $\mathcal{E}_2$ is online-efficient but is valid only for (significantly) higher tolerances, namely
tolerances scaling as $\sqrt{\epsilon}$.

\section{New procedures for accurate and efficient evaluation of the error estimator}
\label{sec:newest}
In this section, online-efficient methods, that are valid for tolerances scaling as $\epsilon$, are devised to evaluate the error bound.

\subsection{Procedure 1: rewriting $\mathcal{E}_2$}
\label{sec:first_rewriting}

We first present the procedure proposed in \cite{casenave}. We consider that a reduced basis of size $\hat{N}$ has been constructed. Let 
$\sigma:=1+2d\hat{N}+(d\hat{N})^2$.
For a given $\mu\in\mathcal{P}_{\rm trial}$ and the resulting $\hat{u}_\mu\in \textnormal{Span}\{u_1, ..., u_{\hat{N}}\}$
solving the reduced problem, we define $\hat{X}(\mu)\in\mathbb{C}^{\sigma}$
as the vector with components $(1,{\hat{x}}_{\mu_I},{\hat{x}^*}_{\mu_I},{\hat{x}^*}_{\mu_I} {\hat{x}}_{\mu_J})$, where ${\hat{x}}_{\mu_I}=\alpha_k(\mu)\gamma_i(\mu)$
%
(we recall that $\gamma_i(\mu)$ are the coefficients of the reduced solution in the reduced basis, see \eqref{eq:defgamma}, and
$\alpha_k(\mu)$ the coefficients of the affine decomposition of $a_\mu$ in \eqref{eq:affine_assump}),
with $1\leq I,J \leq d\hat{N}$ (with $I=i+\hat{N}(k-1)$ such that $1\leq i\leq \hat{N}$, $1\leq k\leq d$, and
with $J=j+\hat{N}(l-1)$ such that $1\leq j\leq \hat{N}$, $1\leq l\leq d$).
We can write the right-hand side of \eqref{eq:est2} as a linear form in $\hat{X}(\mu)$ as follows:
\begin{equation}
\label{eq:linearform}
\displaystyle \delta^2 + 2{\rm Re}(s^t \hat{x}_\mu) + {\hat{x}_\mu}^{*t} S\hat{x}_\mu = \sum_{p=1}^{\sigma}{t_p \hat{X}_p(\mu)},
\end{equation}
where $t_p$ is independent of $\mu$ (as $\delta$, $s$, and $S$ are independent of $\mu$)
and $\hat{X}_p(\mu)$ is the $p$-th component of $\hat{X}(\mu)$.

Now, in the offline stage, we take $\sigma$ values (e.g. random values)
$\mu_r\in\mathcal{P}_{\rm trial}$, $r\in\{1,...,\sigma\}$, of the parameter $\mu$. Then, we compute the
vectors $\hat{X}(\mu_r)$ and the quantities
\begin{equation}
\label{eq:V_r}
V_r:=\sum_{p=1}^{\sigma}{t_p \hat{X}_p(\mu_r)}.
\end{equation}
Finally, we define $T\in\mathbb{C}^{\sigma\times\sigma}$ as the matrix whose columns are formed by the vectors $\hat{X}(\mu_r)$,
that is, $T_{pr}=\hat{X}_p(\mu_r)$ for all $1\leq p,r\leq\sigma$. We assume
that $T$ is invertible, which always happens to be the case in our simulations.

Now, suppose that in the online stage we want to evaluate the error bound for the RB solution $\hat{u}_\mu$ computed at
a certain parameter $\mu\in\mathcal{P}_{\rm trial}$. Then, we evaluate the vector $\hat{X}(\mu)$ and solve the linear system
\begin{equation}
\label{eq:sysT}
\displaystyle T\lambda(\mu)=\hat{X}(\mu),
\end{equation}
yielding $\lambda(\mu)\in\mathbb{C}^\sigma$. We then obtain
$\hat{X}(\mu) = \sum_{r=1}^{\sigma}{\lambda_r(\mu) \hat{X}(\mu_r)}$ and
\begin{equation}
\sum_{p=1}^\sigma t_p \hat{X}_p(\mu) = 
\sum_{p,r=1}^\sigma t_p \lambda_r(\mu) \hat{X}_p(\mu_r) =
\sum_{r=1}^\sigma{\lambda_r(\mu) V_r}.
\end{equation}
This yields the following new formula for computing the error bound:
\begin{equation}
\label{eq:Est3}
\mathcal{E}_3(\mu):={\tilde{\beta}_{\mu}}^{-1}\left(\sum_{r=1}^\sigma{\lambda_r(\mu) V_r}\right)^{\frac{1}{2}},
\end{equation}
where the quantities $V_r=\left\| G_{\mu_r} \hat{u}_{\mu_r}\right\|^2_{\mathcal{V}}$ can be precomputed.
Thus, computing $\mathcal{E}_3$ requires solving \eqref{eq:sysT} and summing the $\sigma$ precomputed quantities $V_r$.
Since the complexity of this procedure is independent of $N$, the formula $\mathcal{E}_3$ is online-efficient for
computing the error bound.

\begin{remark}[Goal-oriented case]
For the goal-oriented case, the procedure is carried out independently on the two multivariate polynomials
$\|G_\mu\hat{u}_\mu\|^2_{\mathcal{V}}$ and $\|{G}_\mu^d \hat{v}_\mu\|^2_{\mathcal{V}}$.
\end{remark}

Notice that $\mathcal{E}_1(\mu)$, $\mathcal{E}_2(\mu)$, and $\mathcal{E}_3(\mu)$ are equal in exact arithmetic.
As pointed out in \cite{casenave}, the matrix $T$ exhibits in practice large condition numbers, and there is no guarantee that
$T$ is actually invertible. We will see in Section \ref{sec:acoustics} for a three-dimensional acoustic scattering
problem that $\mathcal{E}_3$ can be in practice as ill-behaved as $\mathcal{E}_2$. Moreover, there is no a priori
method for selecting the parameters $\mu_r$ for which the quantities $V_r$ are precomputed. In the next section, we propose
a new procedure that solves these problems.

\subsection{Procedure 2: improvement on Procedure 1 using the EIM}
\label{sec:principle}

In the formula $\mathcal{E}_3$, a potentially ill-conditioned problem $T\lambda(\mu)=\hat{X}(\mu)$ is solved in order to exactly
represent $\hat{X}(\mu)$ by the linear combination $\sum_{r=1}^{\sigma}{\lambda_r(\mu) \hat{X}(\mu_r)}$.
Following a suggestion by Patera \cite{Pateracom}, we propose to approximate $\hat{X}(\mu)$ by means of an interpolation procedure.
We want to modify the formula $\mathcal{E}_3$ by an interpolation formula relying on a better conditioned linear system.
The price to pay is that the new formula $\mathcal{E}_4$ will not be equal to $\mathcal{E}_1$ in exact arithmetic;
the interpolation errors are however marginal, as further discussed in Remark \ref{rmkerroracc}.
We also look for a way to choose the
parameters $\mu_r$ for which the quantities $V_r$ have to be precomputed. We refer to these values for $\mu_r$ as ``interpolation points'',
and to the set of these points as $\mathcal{P}_{\rm inter}$.

Consider the function of two variables $(p,\mu)\mapsto \hat{X}_p(\mu)$, for all $p\in\{1,...,\sigma\}$ and all 
$\mu\in\mathcal{P}_{\rm trial}$. We look for an approximation of this function in the form
\begin{equation}
\label{eq:interpb}
\forall\mu\in\mathcal{P_{\rm trial}},\forall p\in\{1, ..., \sigma\},~\hat{X}_p(\mu) \approx \sum_{r=1}^{\hat{\sigma}}{\lambda_r^{\hat{\sigma}}(\mu) \hat{X}_p(\mu_r)},
\end{equation}
for a certain parameter $\hat{\sigma}\leq \sigma$. The empirical interpolation method (EIM) (more precisely the discrete EIM since $p$ is a discrete
variable) provides a
numerical procedure to construct this approximation and to choose the interpolation points (see \cite{Barrault, Maday}).

For completeness, we briefly describe the EIM and adapt the notation of \cite{Maday} to the present context.
The EIM is an offline-online procedure. During the offline stage, $\hat{\sigma}$ basis functions are computed,
denoted $q_j:\mathcal{P}_{\rm trial}\ni\mu\mapsto q_j(\mu)\in\mathbb{C}$, for all $j\in\{1, ...,\hat{\sigma}\}$. These basis functions will be
used in the online stage to carry out the interpolation. We define $q^{\hat{\sigma}}$ as the vector-valued map
$\mathcal{P}_{\rm trial}\ni\mu\mapsto q^{\hat{\sigma}}(\mu):=(q_j(\mu))_{1\leq j\leq \hat{\sigma}}\in\mathbb{C}^{\hat{\sigma}}$.
During the offline stage, $\hat{\sigma}$ interpolation points $\mu_r\in\mathcal{P}_{\rm trial}$ are also selected; these points are
collected in the set $\mathcal{P}_{\rm inter}$.
Notice that $\mathcal{P}_{\rm select}$, the set of parameter values selected by the greedy algorithm of the RB method,
is different from $\mathcal{P}_{\rm inter}$.
During the online stage, the matrix $B^{\hat{\sigma}}\in\mathbb{C}^{\hat{\sigma},\hat{\sigma}}$,
where $B^{\hat{\sigma}}_{ij}=q_i(\mu_j)$, for $1\leq i,j\leq \hat{\sigma}$, is constructed.
Letting $\mu\in\mathcal{P}_{\rm trial}$, we solve for $\lambda^{\hat{\sigma}}(\mu)\in\mathbb{C}^{\hat{\sigma}}$ such that
\begin{equation}
\label{eq:EIMonline}
{B^{\hat{\sigma}}}\lambda^{\hat{\sigma}}(\mu)=q^{\hat{\sigma}}(\mu),
\end{equation}
and compute the rank-$\hat{\sigma}$ interpolation operators defined as follows.
\begin{definition}
Let $1\leq k\leq \hat{\sigma}$. The rank-$k$ interpolation operator $I^{k}$ is defined such that
\label{def_interp}
\begin{equation}
\label{eq:interp}
I^{k}\hat{X}(\mu) := \sum_{r=1}^{k}{\lambda_r^{k}(\mu) \hat{X}(\mu_r)},
\end{equation}
where $\lambda^{k}(\mu)\in\mathbb{C}^{k}$ solves 
\begin{equation}
\label{eq:onlineq}
{B^{k}}\lambda^{k}(\mu)=q^{k}(\mu).
\end{equation}
\end{definition}

Equation \eqref{eq:interp} defines an interpolation in the sense that $I^{k}\hat{X}_{p_r}(\mu)=\hat{X}_{p_r}(\mu)$
for all $1\leq r\leq k$ and all $\mu\in \mathcal{P_{\rm trial}}$.
The formula $\hat{X}_p(\mu)\approx(I^{\hat{\sigma}} \hat{X})_p(\mu)$, for all $\mu\in\mathcal{P_{\rm trial}}$ and all $p\in\{1, ..., \sigma\}$,
provides the approximate interpolation formula searched for in \eqref{eq:interpb}.

\begin{definition}
The residual operator $\delta^{\hat{\sigma}}$ is defined by
\label{def_delta}
\begin{equation}
\label{eq:delta}
\delta^{\hat{\sigma}} := {\rm Id}-I^{\hat{\sigma}}.
\end{equation}
\end{definition}

Algorithm \ref{algo2} presents the construction of the function $q^{\hat{\sigma}}$ by a greedy algorithm during the offline
stage.
This EIM algorithm is a variant from the classical one, described in~\cite{Maday}. The differences stand in the definition of the interpolation operator~\eqref{eq:EIMonline}, the linear system
~\eqref{eq:onlineq} to solve during the online calls, and the definition of the $B^k$ matrix. In particular, the present variant leads to the approximation~\eqref{eq:interp}, which is nonintrusive in the sens
that $I^{k}\hat{X}(\mu)$ is obtained as a linear combination
of evaluations of $\hat{X}$ at some parameter values $\mu_r$. The classical EIM can recover such a property, but to the price of an additional change of basis between $q_k(\cdot)$ and $\hat{X}_{p_k}(\cdot)$.
However, contrary to the classical EIM, the variant needs the additional change of basis to be able to compute an approximation between learning points, namely for $\mu\in\mathcal{P}_{\text{trial}}\backslash
\mathcal{P}$. We refer to \cite[Section~6.8]{Casenave_phd} for more details about the differences between the EIM variant considered here and the classical algorithm.

\begin{algorithm}[h!]
	\caption{Offline stage of the EIM}
	\label{algo2}
	\begin{algorithmic}[1]
        \STATE {Choose $\hat{\sigma}>1$}
        \hfill \COMMENT{Number of interpolation points}
	\STATE {Set $k:=1$}
        \STATE {Compute $\displaystyle p_1:=\underset{p\in \{1,..., \sigma\}}{\textnormal{argmax}}\|\hat{X}_p(\cdot)\|_{\ell^\infty(\mathcal{P}_{\text{trial}})}$}
        \STATE {Compute $\displaystyle \mu_1:=\underset{\mu\in \mathcal{P}_{\text{trial}}}{\textnormal{argmax}}|\hat{X}_{p_1}(\mu)|$ and set
                $\displaystyle \mathcal{P}_{\rm inter}=\{\mu_1\}$}
        \hfill \COMMENT{First interpolation point}
	\STATE {Set $\displaystyle q_1(\cdot):=\frac{\hat{X}_{p_1}(\cdot)}{\hat{X}_{p_1}(\mu_1)}$}
       \hfill \COMMENT{First basis function}
        \STATE {Set $B^1_{11}:=1$}
       \hfill \COMMENT{Initialize $B$ matrix}
        \WHILE {$k<\hat{\sigma}$} 
		\STATE Compute $\displaystyle p_{k+1}:=\underset{p\in \{1,..., \sigma\}}{\textnormal{argmax}}\|(\delta^k \hat{X})_p(\cdot)\|_{\ell^\infty(\mathcal{P}_{\text{trial}})}$
                \STATE Compute $\displaystyle \mu_{k+1}:=\underset{\mu\in \mathcal{P}_{\text{trial}}}{\textnormal{argmax}}|(\delta^k \hat{X})_{p_{k+1}}(\mu)|$
                \hfill \COMMENT{$(k+1)$-th interpolation point}  
                \STATE Set $\displaystyle \mathcal{P}_{\rm inter}:=\mathcal{P}_{\rm inter}\cup\{\mu_{k+1}\}$
                \hfill \COMMENT{Update of $\mathcal{P}_{\rm inter}$}
                \STATE Set $\displaystyle q_{k+1}(\cdot):=\frac{(\delta^k \hat{X})_{p_{k+1}}(\cdot)}{(\delta^k \hat{X})_{p_{k+1}}(\mu_{k+1})}$
                \hfill \COMMENT{$(k+1)$-th basis function}
                \STATE $\displaystyle B^{k+1}_{ij}:=q_i(\mu_j)$, $1\leq i,j\leq {k+1}$
                \hfill \COMMENT{$(k+1)$-th $B$ matrix}
                \STATE $k\leftarrow k+1$
                \hfill \COMMENT{Increment the size of the interpolation}
	\ENDWHILE
\end{algorithmic}
\end{algorithm}

\begin{definition}
\label{def:newprox_formula}
The new formula for computing the error bound is
\begin{equation}
\mathcal{E}_4(\mu):={\tilde{\beta}_{\mu}}^{-1}\left(\sum_{r=1}^{\hat{\sigma}}{\lambda^{\hat{\sigma}}_r(\mu) V_r}\right)^{\frac{1}{2}},
\end{equation}
where $\lambda^{\hat{\sigma}}(\mu)$ is the solution to \eqref{eq:EIMonline}. We recall that
$V_r=\left\|G_{\mu_r}\hat{u}_{\mu_r}\right\|_{\mathcal{V}}^2$. 
\end{definition}

\begin{proposition}
The computation of the formula $\mathcal{E}_4$ is well defined, and this formula is online-efficient.
\end{proposition}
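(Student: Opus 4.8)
The plan is to prove the two assertions of the proposition separately: first, that the online linear system defining $\lambda^{\hat{\sigma}}(\mu)$ is uniquely solvable (so that $\mathcal{E}_4(\mu)$ can be evaluated for every $\mu\in\mathcal{P}_{\rm trial}$), and second, that every operation entering the online evaluation of $\mathcal{E}_4(\mu)$ has a cost independent of $N$. The well-definedness is the conceptual heart, since it is precisely the feature that distinguishes $\mathcal{E}_4$ from $\mathcal{E}_3$, whose matrix $T$ may be singular.

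For well-definedness, the key point is that $B^{\hat{\sigma}}$ is invertible, which I would obtain by proving by induction on $k$ that $B^{k}$ is upper triangular with unit diagonal. The base case is $B^1_{11}=q_1(\mu_1)=1$. Assuming $B^{k}$ has this structure, it is invertible, so \eqref{eq:onlineq} is uniquely solvable; moreover, since the $j$-th column of $B^k$ equals $q^{k}(\mu_j)$, one gets $\lambda^{k}(\mu_j)=e_j$ (the $j$-th canonical basis vector) for every $j\le k$, whence $(\delta^{k}\hat{X})(\mu_j)=\hat{X}(\mu_j)-(I^{k}\hat{X})(\mu_j)=0$ for $j\le k$. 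Consequently $q_{k+1}(\mu_j)=0$ for $j\le k$, while $q_{k+1}(\mu_{k+1})=1$ by the normalization in Algorithm~\ref{algo2}, which furnishes the triangular structure of $B^{k+1}$ and closes the induction. Taking $k=\hat{\sigma}$, the matrix $B^{\hat{\sigma}}$ is invertible, so $\lambda^{\hat{\sigma}}(\mu)$ is well defined by \eqref{eq:EIMonline}. I would also note that the greedy construction does not degenerate, the normalizing denominators $(\delta^{k}\hat{X})_{p_{k+1}}(\mu_{k+1})$ being nonzero as maxima of a nonvanishing residual for $k<\hat{\sigma}\le\sigma$. Finally, for the square root I would observe that $\sum_{r}\lambda^{\hat{\sigma}}_r(\mu)V_r=\sum_{p}t_p(I^{\hat{\sigma}}\hat{X})_p(\mu)$ is the EIM approximation of $\sum_p t_p\hat{X}_p(\mu)=\|G_\mu\hat{u}_\mu\|^2_{\mathcal{V}}\ge 0$; it coincides with $V_r=\|G_{\mu_r}\hat{u}_{\mu_r}\|^2_{\mathcal{V}}\ge 0$ at each interpolation point $\mu_r$ (where $\lambda^{\hat{\sigma}}(\mu_r)=e_r$), and elsewhere differs from a non-negative quantity only by the interpolation error controlled in Remark~\ref{rmkerroracc}, so the radicand is non-negative.

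For online-efficiency, I would first fix the list of quantities precomputed offline: the interpolation data $\{\mu_r,p_r\}$, the matrix $B^{\hat{\sigma}}$ (equivalently the values $\hat{X}_{p_i}(\mu_j)$), and the scalars $V_r=\|G_{\mu_r}\hat{u}_{\mu_r}\|^2_{\mathcal{V}}$ from \eqref{eq:V_r}, all computed in the offline stage where dependence on $N$ is admissible. For a new $\mu$, the online evaluation of $\mathcal{E}_4(\mu)$ then consists of solving the reduced problem $\hat{E}_\mu$ for the coefficients $\gamma_i(\mu)$ (cost $O(\hat{N}^3)$, independent of $N$, since $\hat{A}_\mu=\sum_k\alpha_k(\mu)\hat{A}_k$ is assembled from precomputed $\hat{A}_k$); forming the $\hat{\sigma}$ needed components $\hat{X}_{p_r}(\mu)$, each being $1$, a product $\alpha_k(\mu)\gamma_i(\mu)$, its conjugate, or a product of two such; assembling $q^{\hat{\sigma}}(\mu)$ and solving the triangular systems \eqref{eq:onlineq} and \eqref{eq:EIMonline} by back-substitution (cost $O(\hat{\sigma}^2)$); summing the $\hat{\sigma}$ precomputed $V_r$; and multiplying by $\tilde{\beta}_\mu^{-1}$, available in complexity independent of $N$ (e.g.\ by the Successive Constraint Method, cf.\ Remark~\ref{remarkinfsupcte}). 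Since no step involves $N$, the formula $\mathcal{E}_4$ is online-efficient in the sense of Section~\ref{sec:efficiency}.

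The main obstacle is the well-definedness half, and within it the induction establishing the triangular, unit-diagonal structure of $B^{\hat{\sigma}}$: everything hinges on the vanishing of the residual $\delta^{k}\hat{X}$ at the already-selected interpolation points, which in turn rests on identifying the $j$-th column of $B^{k}$ with $q^{k}(\mu_j)$ so that $\lambda^{k}(\mu_j)=e_j$. The online-efficiency half is comparatively routine once the precomputed quantities are fixed; the only subtlety there is to verify that only $\hat{\sigma}$ components of $\hat{X}(\mu)$ (rather than all $\sigma$) are required online, which is also what underlies the claimed reduction in precomputations. I would keep an eye on the non-negativity of the radicand, but treat it as a minor point, since it is exact at the interpolation points and otherwise controlled by the interpolation error.
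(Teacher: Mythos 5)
Your proposal is correct and its overall structure matches the paper's proof: well-definedness is reduced to the invertibility of $B^{\hat{\sigma}}$ via its upper triangular, unit-diagonal structure, and online-efficiency follows because every online operation involves only quantities of size $\hat{N}$, $\hat{\sigma}$, or $\sigma$, never $N$. The one substantive difference is that the paper obtains the triangular structure by citing Theorem 1 of \cite{Maday}, whereas you prove it by induction: the $j$-th column of $B^k$ equals $q^k(\mu_j)$, hence $\lambda^k(\mu_j)=e_j$, hence the residual $\delta^k\hat{X}$ vanishes at the already selected points, so the new row of $B^{k+1}$ is $(0,\dots,0,1)$. This self-contained argument is arguably a better fit than the citation, because the algorithm used here is a variant of the classical EIM with a differently defined $B$ matrix (as the paper itself notes), so the cited theorem does not apply verbatim; your induction verifies the property directly for the variant at hand. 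Your extra points (non-degeneracy of the normalizing denominators, non-negativity of the radicand, explicit cost accounting) go beyond what the paper addresses; the only imprecision is that $q^{\hat{\sigma}}(\mu)$ is not literally assembled from the raw values $(\hat{X}_{p_r}(\mu))_{1\leq r\leq\hat{\sigma}}$ but is obtained from them through a precomputed triangular change of basis (or simply read from values stored during the offline stage when $\mu\in\mathcal{P}_{\rm trial}$); this detail does not affect the conclusion that the online cost is independent of $N$.
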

\begin{proof}
Owing to \cite[Theorem 1]{Maday}, the matrix $B$ is upper triangular with diagonal unity. Hence, $\det{B}=1$ and $B$ is
guaranteed to be invertible. The online procedure of EIM, consisting in solving a linear system defined by the matrix $B$,
is thus well defined. Then, since the EIM procedure in carried out on $\hat{X}_p(\mu)$, for all $p\in\{1,...,\sigma\}$
and all $\mu\in\mathcal{P}_{\rm trial}$, all the computations involved are of complexity independent of $N$,
even the offline part of the EIM. Finally, the complexity of the online part of EIM only depends on $\hat{\sigma}$.
\end{proof}

\begin{remark}[Stopping criterion in Algorithm \ref{algo2}]
For ease of presentation, we chose a simple stopping criterion based on an a priori fixed maximum number of interpolation points.
In practice, one possibility is to stop the algorithm when the maximal approximation error in the EIM is below a prescribed
value, by monitoring the quantity $(\delta^k \hat{X})_{p_{k+1}}(\mu_{k+1})$.
\end{remark}

\begin{remark}[Interpolation errors]
\label{rmkerroracc}
As already observed, $\mathcal{E}_4$ does not equal $\mathcal{E}_1$ in exact arithmetics owing to interpolation errors (when
$\hat{\sigma} < \sigma$).
Thus, although Algorithm \ref{algo2} yields an accurate approximation of $\hat{X}_p(\mu)$,
a given interpolation error on $\hat{X}_p(\mu)$ does not directly
translate into a bound on the difference between $\mathcal{E}_1(\mu)$ and $\mathcal{E}_4(\mu)$ 
(the latter depending also on $\delta$, $s$, and $S$, as well as on $\tilde{\beta}_\mu$).
We observe in our numerical experiments that these latter errors are lower than the errors incurred in
the evaluation of $\mathcal{E}_2$ (due to round-off errors)
and in the evaluation of $\mathcal{E}_3$ (due to the poor conditioning of $T$).
\end{remark}

\begin{remark}[Non affine dependence]
When the affine dependence assumption is not available (see Definition~\ref{def:affine_assump}), one can look for an approximation of $a_\mu$ in the following form:
\begin{equation}
\label{eq:variationalcrime}
a_\mu(u,v)\approx\sum_{k=1}^d{\alpha_k(\mu)a_k(u,v)}, \qquad \forall u,v \in\mathcal{V}.
\end{equation}
In the reduced basis context, this approximation is usually computed using the EIM. We saw that the formula~\eqref{eq:E2} for $\mathcal{E}_2$ makes use of this affine decomposition to ensure online efficiency, and
therefore does not account for the approximation in the operator.
On the contrary, the formulae~\eqref{eq:E1} for $\mathcal{E}_1$ and~\eqref{eq:Est3} for $\mathcal{E}_3$ use the exact operator.
\end{remark}

\subsection{Illustration}
\label{sec:numerics}
Consider as in \cite{casenave} a one-dimensional linear diffusion problem, namely the boundary value problem
$-u''+\mu u=1$ on $]0,1[$ with $u(0)=u(1)=0$, with parameter $\mu\in\mathcal{P}:=[1,100]$. The analytic solution is
\begin{equation}
u(x)=-\frac{1}{\mu}\left(\cosh\left(\sqrt{\mu}x\right)-1\right)+
\frac{\cosh\left(\sqrt{\mu}\right)-1}{\mu\sinh\left(\sqrt{\mu}\right)}\sinh\left(\sqrt{\mu}x\right).
\end{equation}
The Lax--Milgram theory is valid, and the coercivity constant is bounded from below by $1$ in the $H^1$-norm.
The error bound is given by $\mathcal{E}_1(\mu)=\|G_\mu \hat{u}_\mu \|_{H^1\left(]0,1[\right)}$.
Lagrange $\mathbb{P}_1$ finite elements are used with uniform mesh cells of length 0.005.
The set $\mathcal{P}_{\rm trial}$ consists of $1000$ points uniformly distributed in $\mathcal{P}$.
The RB method is carried out until the formula $\mathcal{E}_2$ suffers from round-off errors,
which already happens for a reduced basis of size $\hat{N}=7$ (since $d=2$, we obtain $\sigma=225$). A direct solver is
used, so that the only adverse phenomenon to compute the error bound are round-off errors.

In Figure \ref{fig:plot}, we see that the classical formula $\mathcal{E}_2$ is not valid for computing the error bound
with any tolerance below $10^{-7}$, whereas the formulae $\mathcal{E}_1$, $\mathcal{E}_3$ and
$\mathcal{E}_4$ are valid with tolerances down to $10^{-14}$. The difference is of $7$ orders of magnitude ; given that
$\sqrt{\epsilon}\approx 10^{-7}$, this is consistent with Remark \ref{direct_case} and Section \ref{sec:first_rewriting}.

\begin{figure}[ht]
\centerline{
\includegraphics [width=9cm] {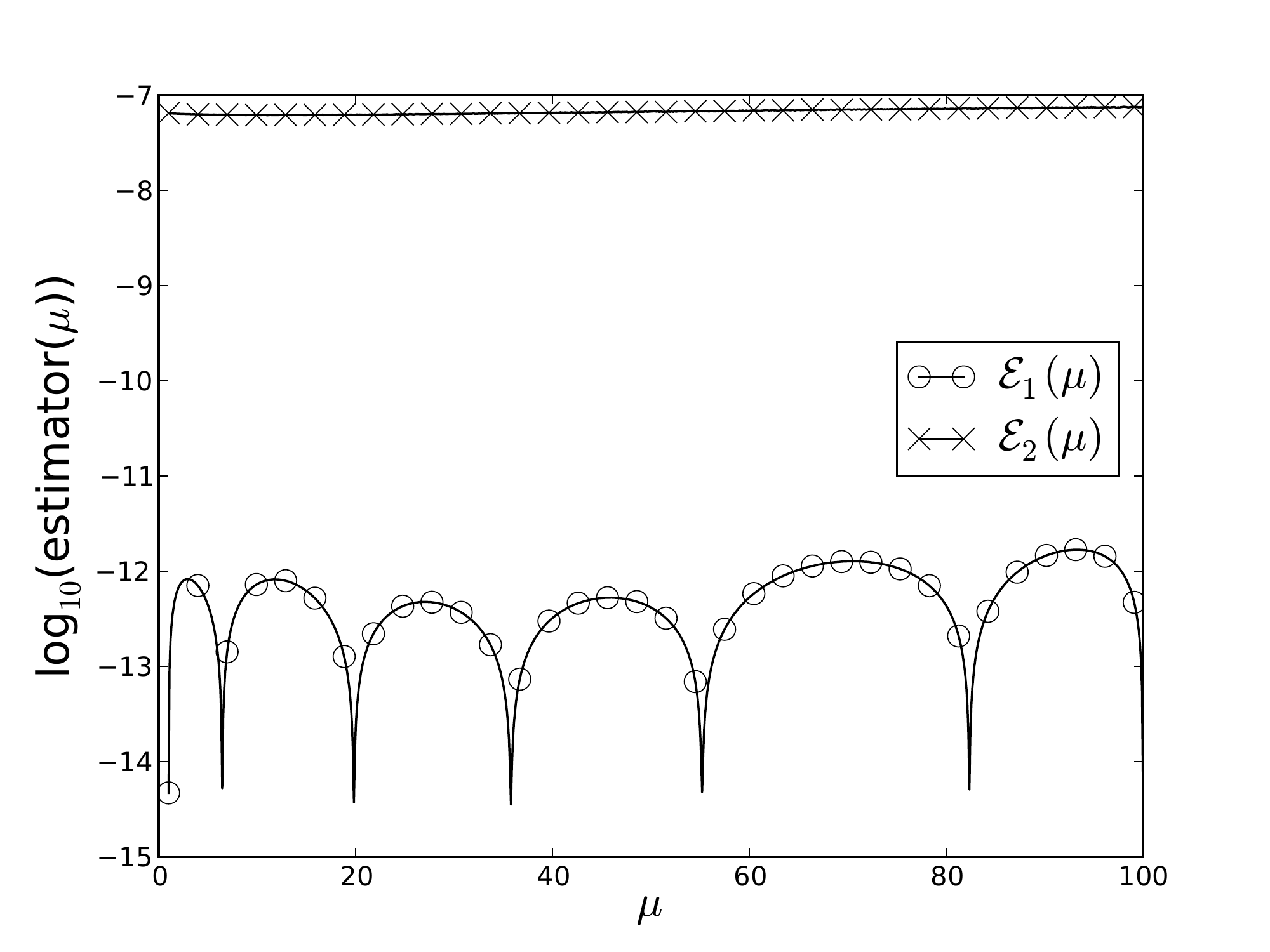}
\includegraphics [width=9cm] {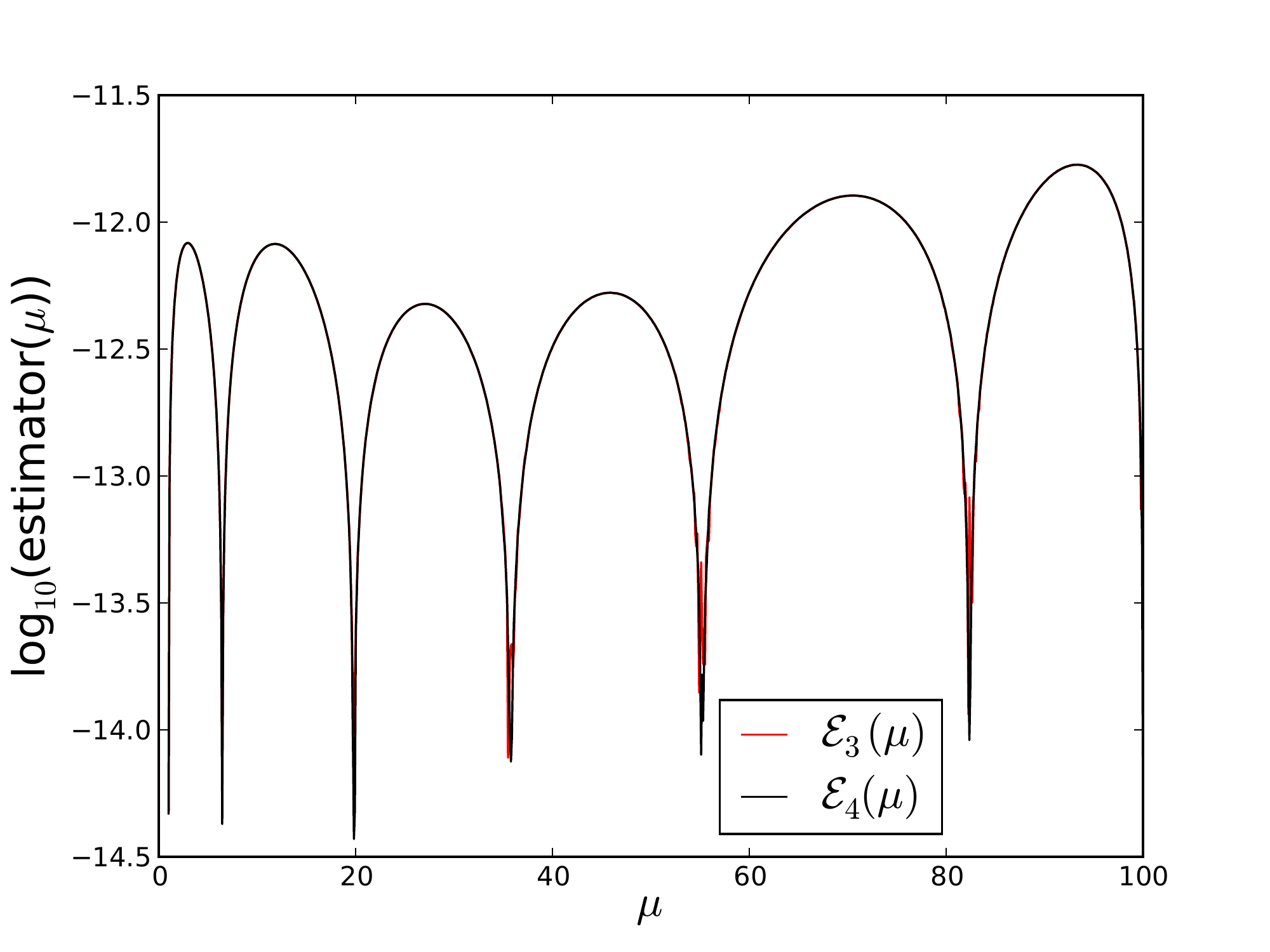}}
\caption{Error bound curves with respect to the parameter. The formula $\mathcal{E}_4$ is computed with $\hat{\sigma}=23$.}
\label{fig:plot}
\end{figure}

In Figure \ref{fig:plotbis}, we observe that instabilities occur in the formula $\mathcal{E}_3$, especially for parameter values close to the
elements of $\mathcal{P}_{\rm select}$. This is due to the poor conditioning of the matrix $T$ when solving \eqref{eq:sysT}.
The new formula $\mathcal{E}_4$ based on the EIM is seen to introduce much less numerical errors than $\mathcal{E}_3$.

\begin{figure}[ht]
\centerline{
\includegraphics [width=9cm] {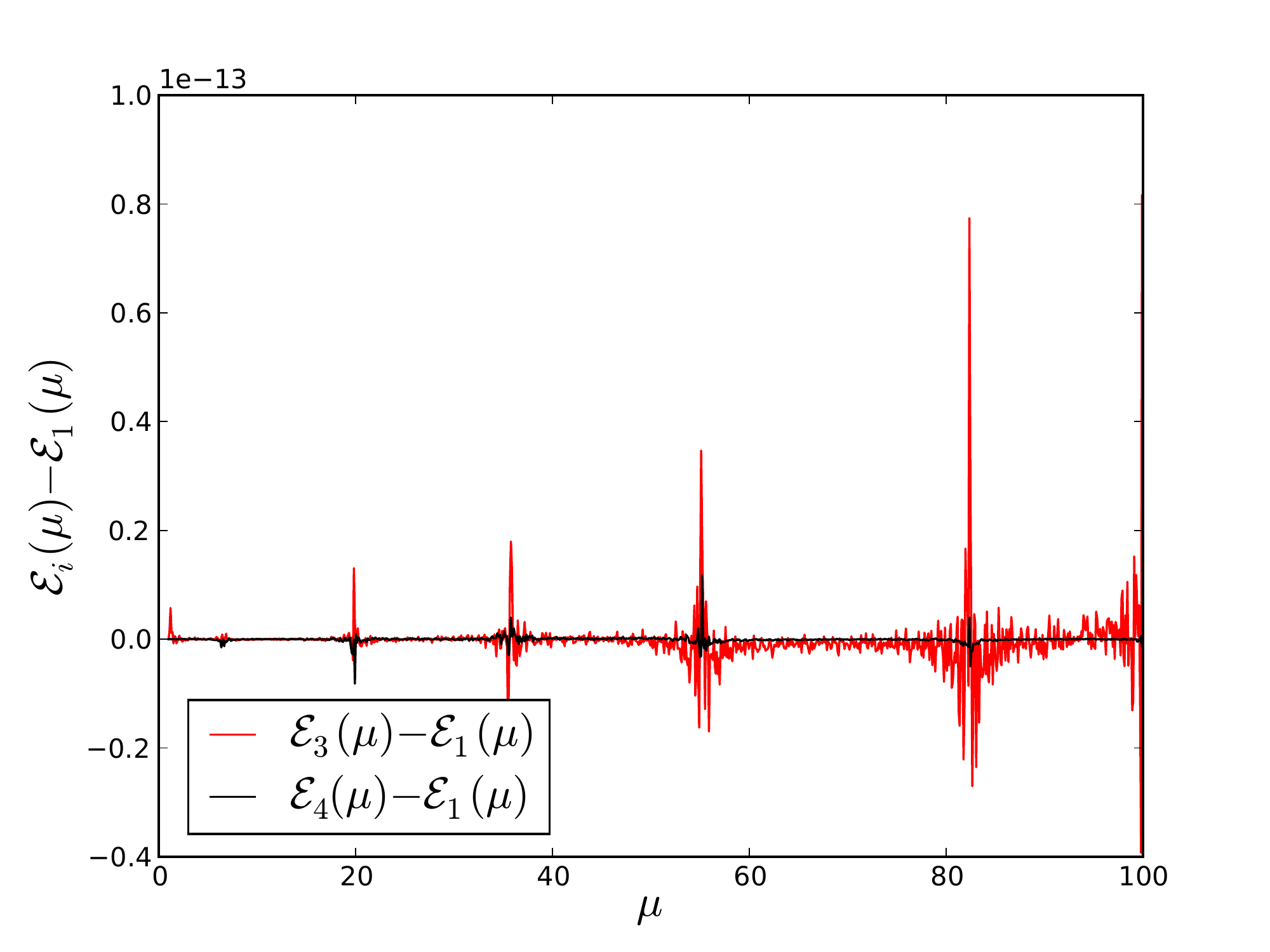}}
\caption{Comparison of the formulae $\mathcal{E}_3$ and $\mathcal{E}_4$, with respect to the formula $\mathcal{E}_1$.}
\label{fig:plotbis}
\end{figure}


\subsection{Procedure 3: improvement of Procedure 2 using a stabilized EIM}
\label{sec:stabEIM}
In practice, round-off errors are accumulated during the loop in Algorithm \ref{algo2}, and if we keep increasing the number
of interpolation points, the coefficients of the matrix $B$ suffer from round-off errors, so that the relation
$\textnormal{det}(B)=1$ no longer holds. The matrix $B$ becomes non invertible at some stage.
To solve this problem, we now propose a numerical stabilization of EIM based on the following property:
\begin{property}
\label{prop_stab}
There holds
\begin{equation}
\label{eq:prop_stab}
\forall i<j,~I^j\circ I^i = I^i,
\end{equation}
where the interpolation operators $I^j$ are defined by \eqref{eq:interp}.
\end{property}
\begin{proof}
Using \cite[Lemma 1]{Maday}, $I^i \hat{X}\in\text{Span}\left(q_1,...,q_i\right)$
and $I^i v=v$ for all $v\in\text{Span}\left(q_1,...,q_i\right)$. Therefore, $I^j\circ I^i \hat{X} = I^i \hat{X}$ for all $i<j$.
\end{proof}
In our numerical experiments, we observe that, as the number of iterations of the greedy procedure for the EIM grows, the relation \eqref{eq:prop_stab}
is no longer verified numerically, due to accumulation of round-off errors. 
These numerical instabilities can be compensated in the same fashion as the
Gram--Schmidt orthonormalization procedure is stabilized (see \cite[chapter 5.2.8]{golub}).
The Gram--Schmidt algorithm transforms a linearly
independent family of vectors $\{v_i\}$ into an orthonormal basis $\{u_i\}$. To simplify the presentation, we suppose in
what follows that the normalization step is not carried out. Consider
the orthogonalization step for the $k$-th vector. We denote by $\Pi^k$ the projection operator on $\text{Span}(u_1,...,u_k)$,
and $\delta^k:={\rm Id}-\Pi^k$. For the EIM, we suppose that $(k-1)$ interpolation operators $I^i$, $1\leq i \leq k-1$, have
been constructed, and we wish to construct the $k$-th interpolation operator $I^k$.
A comparison between the stabilized Gram--Schmidt orthonormalization procedure and
the proposed stabilization for the EIM is presented in Table \ref{tab1}.

\renewcommand{\arraystretch}{1.5}
\begin{table}
\begin{tabular}{|c |c |c|}
 \cline{2-3}
\multicolumn{1}{c|}{} & stabilized Gram--Schmidt & stabilized EIM\\
 \hline
\small global input &\small  $(v_1,...,v_{\hat{\sigma}})$ basis of $\mathbb{C}^{\hat{\sigma}}$ & \small $v:\mathcal{P}_{\rm trial}\rightarrow\mathbb{C}^{\hat{\sigma}}$\\
 \hline
\small classical residual step $k$ & \small $\delta^k v_k=v_k-\Pi^{k} v_k$ &\small  $(\delta^k v)(\mu)=v(\mu)-(I^{k}v)(\mu)$\\
 \hline
\multirow{4}*{\small intermediate residuals step $k$} & \small $\delta^{k,1}_{\rm stab}v_k=v_k-\Pi^{1} v_k$ & \small $(\delta^{k,1}_{\rm stab}v)(\mu)=v(\mu)-(I^{1}v)(\mu)$\\[3pt]
& \small $\delta^{k,2}_{\rm stab}v_k=\delta^{k,1}_{\rm stab}v_k-\Pi^{2} \delta^{k,1}_{\rm stab}v_k$, &  \small $(\delta^{k,2}_{\rm stab}v)(\mu)=(\delta^{k,1}_{\rm stab}v)(\mu)-I^{2} (\delta^{k,1}_{\rm stab}v)(\mu)$,\\[3pt]
& \small $\vdots$ &\small  $\vdots$ \\[3pt]
&\small  $\delta^{k,k}_{\rm stab}v_k=\delta^{k,k-1}_{\rm stab}v_k-\Pi^{k} \delta^{k,k-1}_{\rm stab}v_k$ &\small  $(\delta^{k,k}_{\rm stab}v)(\mu)=(\delta^{k,k-1}_{\rm stab}v)(\mu)-I^{k}( \delta^{k,k-1}_{\rm stab}v)(\mu)$ \\[3pt]
\hline
\small  stabilized residual step $k$ & \small $\delta^{k}_{\rm stab}v_k=\delta^{k,k}_{\rm stab}v_k$ & \small $(\delta^{k}_{\rm stab}v)(\mu)=(\delta^{k,k}_{\rm stab}v)(\mu)$ \\[3pt]
 \hline
\multirow{2}*{\small global output} & \small $(\delta^{1}_{\rm stab}v_1,\delta^{2}_{\rm stab}v_2,...,\delta^{\hat{\sigma}}_{\rm stab}v_{\hat{\sigma}})$ & \multirow{2}*{\small $(I^{\hat{\sigma}}v)(\mu)$}\\
&{\small orthogonal basis of $\text{Span}(v_1,...,v_{\hat{\sigma}})$}&\\
 \hline
\end{tabular}
\vspace{0.2cm}
\caption{\label{tab1} Comparison between stabilized Gram--Schmidt and stabilized EIM.}
\end{table}

\begin{proposition}
Let $k\in\mathbb{N}^*$. In exact arithmetic, the following relations hold for the residuals defined in Table \ref{tab1}:
$\delta^k_{\rm stab} v=\delta^k v$.
\end{proposition}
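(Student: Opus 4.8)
The plan is to recognize that the stabilized residual $\delta^k_{\rm stab}$ is exactly the composition of the elementary residual operators $({\rm Id}-I^i)$, $1\leq i\leq k$, applied in increasing order of $i$, and then to collapse this composition down to the single operator ${\rm Id}-I^k$ by repeatedly invoking Property \ref{prop_stab}. This mirrors the classical fact that modified (stabilized) Gram--Schmidt and classical Gram--Schmidt produce the same vectors in exact arithmetic, which is precisely the Gram--Schmidt analogue sitting in the other column of Table \ref{tab1}.

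First I would read off from the EIM column of Table \ref{tab1} that the intermediate residuals obey the recursion $\delta^{k,i}_{\rm stab}=({\rm Id}-I^i)\circ\delta^{k,i-1}_{\rm stab}$ for $2\leq i\leq k$, with base case $\delta^{k,1}_{\rm stab}={\rm Id}-I^1$, and that by definition $\delta^k_{\rm stab}=\delta^{k,k}_{\rm stab}$. Unrolling the recursion gives
\begin{equation}
\delta^k_{\rm stab}=({\rm Id}-I^k)({\rm Id}-I^{k-1})\cdots({\rm Id}-I^1),
\end{equation}
a product in which the rightmost factor acts first.

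The key step is the elementary identity $({\rm Id}-I^j)({\rm Id}-I^i)={\rm Id}-I^j$ valid for $i<j$. Expanding the product and using $I^j\circ I^i=I^i$ from Property \ref{prop_stab} yields ${\rm Id}-I^i-I^j+I^j\circ I^i={\rm Id}-I^j$. With this in hand I would prove by induction on $i$ that $\delta^{k,i}_{\rm stab}={\rm Id}-I^i$. The base case $i=1$ holds by definition. For the inductive step, $\delta^{k,i}_{\rm stab}=({\rm Id}-I^i)\,\delta^{k,i-1}_{\rm stab}=({\rm Id}-I^i)({\rm Id}-I^{i-1})={\rm Id}-I^i$, using the induction hypothesis and the elementary identity with $i-1<i$. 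Taking $i=k$ then gives $\delta^k_{\rm stab}=\delta^{k,k}_{\rm stab}={\rm Id}-I^k=\delta^k$, which is the claimed equality applied to any $v$.

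I expect no serious obstacle: the whole content is the exact-arithmetic cancellation encoded in Property \ref{prop_stab}, and nothing beyond it is needed. The only point requiring a little care is to apply Property \ref{prop_stab} with the correct strict inequality $i<j$ at each stage of the telescoping, since the relation $I^j\circ I^i=I^i$ fails in the reverse order and the operators $I^i$ are not mutually idempotent; keeping the factors in their prescribed increasing order is what makes each cancellation legitimate.
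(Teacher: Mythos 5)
Your proof is correct and follows essentially the same route as the paper: an induction on $i$ showing $\delta^{k,i}_{\rm stab}={\rm Id}-I^i$, where the inductive step expands $({\rm Id}-I^i)({\rm Id}-I^{i-1})$ and cancels via $I^i\circ I^{i-1}=I^{i-1}$ from Property \ref{prop_stab}. The paper performs exactly this recursion (and notes the analogous argument for Gram--Schmidt with $\Pi^i\circ\Pi^{i-1}=\Pi^{i-1}$), so there is nothing to add.
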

\begin{proof}
We prove by recursion that, for all $i\leq k$, $\delta_{\rm stab}^{k,i}=\delta^i$.
The case $i=1$ is clear from the definition of the first intermediate residual in Table \ref{tab1}.
Let $i\leq k$ and suppose that $\delta_{\rm stab}^{k,i-1}={\rm Id}-I^{i-1}$ for the EIM. There holds
\begin{equation}
\delta^{k,i}_{\rm stab}=\delta_{\rm stab}^{k,i-1}-I^i\circ\delta_{\rm stab}^{k,i-1}={\rm Id}-I^{i-1}-I^i+ I^{i}\circ I^{i-1}={\rm Id}-I^i=\delta^i,
\end{equation}
since $I^{i}\circ I^{i-1}=I^{i-1}$ owing to Property \ref{prop_stab}. The results follow from the case $i=k$.
The same relation is proved likewise for the Gram--Schmidt procedure, for which $\Pi^{i}\circ\Pi^{i-1}=\Pi^{i-1}$ holds as well.
\end{proof}

\begin{definition}[Stabilized EIM]
The stabilized EIM consists in the same offline procedure as the one described in Section \ref{sec:principle},
except that the residuals $\delta^k$ are replaced by the stabilized residuals $\delta^k_{\rm stab}$
defined in Table \ref{tab1}. The online stage is the same as that of the classical EIM. 
\end{definition}

The stabilized Gram--Schmidt procedure generates a set of vectors much less polluted by round-off errors
(see \cite{bjorck,giraud}).
By analogy we expect that the stabilized EIM produces a more accurate interpolation procedure than the classical EIM,
that is, much less polluted by round-off errors. This is numerically verified in Figure~\ref{fig:detcond}, where
$\text{det}(B^{\hat{\sigma}})$ and $\text{cond}(B^{\hat{\sigma}})$ are represented as a function of $\hat{\sigma}$.
We consider the test case described in Section \ref{sec:numerics}, where we recall that $\hat{N}=7$, $d=2$, and $\sigma=225$.
If the method is stable, then $\text{det}(B^{\hat{\sigma}})=1$ should hold throughout the process.
Figure~\ref{fig:detcond} shows that the stabilized EIM behaves as intended.
The classical EIM curve stops since the matrix $B^{\hat{\sigma}}$ becomes noninvertible at some point:
a parameter already in $\mathcal{P}_{\rm inter}$ has been selected by the greedy algorithm.
Invertibility can be recovered artificially by ensuring that the new interpolation point is not an element
of the current set $\mathcal{P}_{\rm inter}$. We call this procedure EIM with unique choice.
However, this fix is not completely satisfactory, since $\text{det}(B^{\hat{\sigma}})=1$ is not satisfied.
Moreover, $\text{cond}(B^{\hat{\sigma}})$ is much more ill-behaved with this procedure than with the stabilized EIM.

\begin{figure}[ht]
\centerline{
\includegraphics [width=9cm] {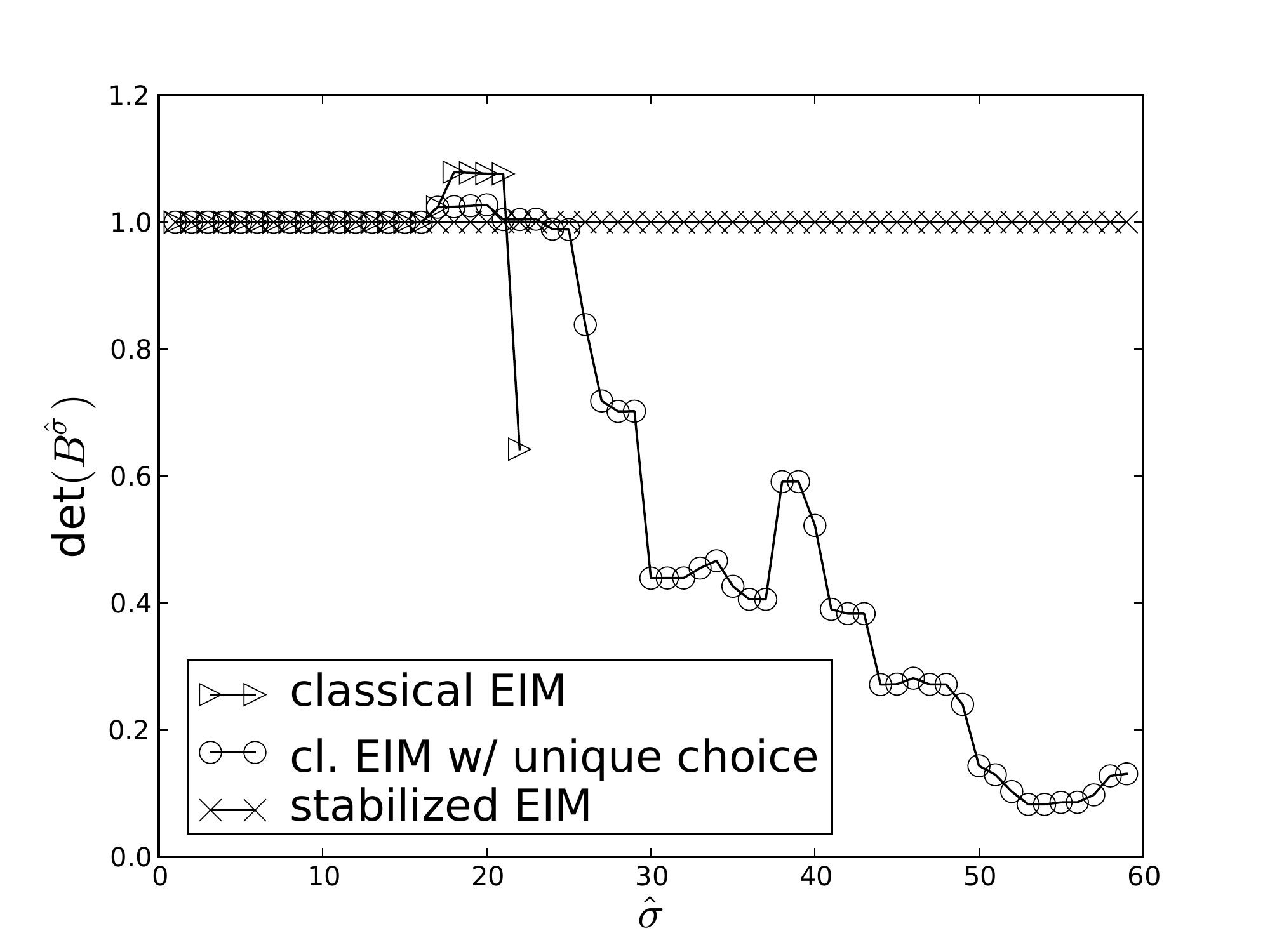}
\includegraphics [width=9cm] {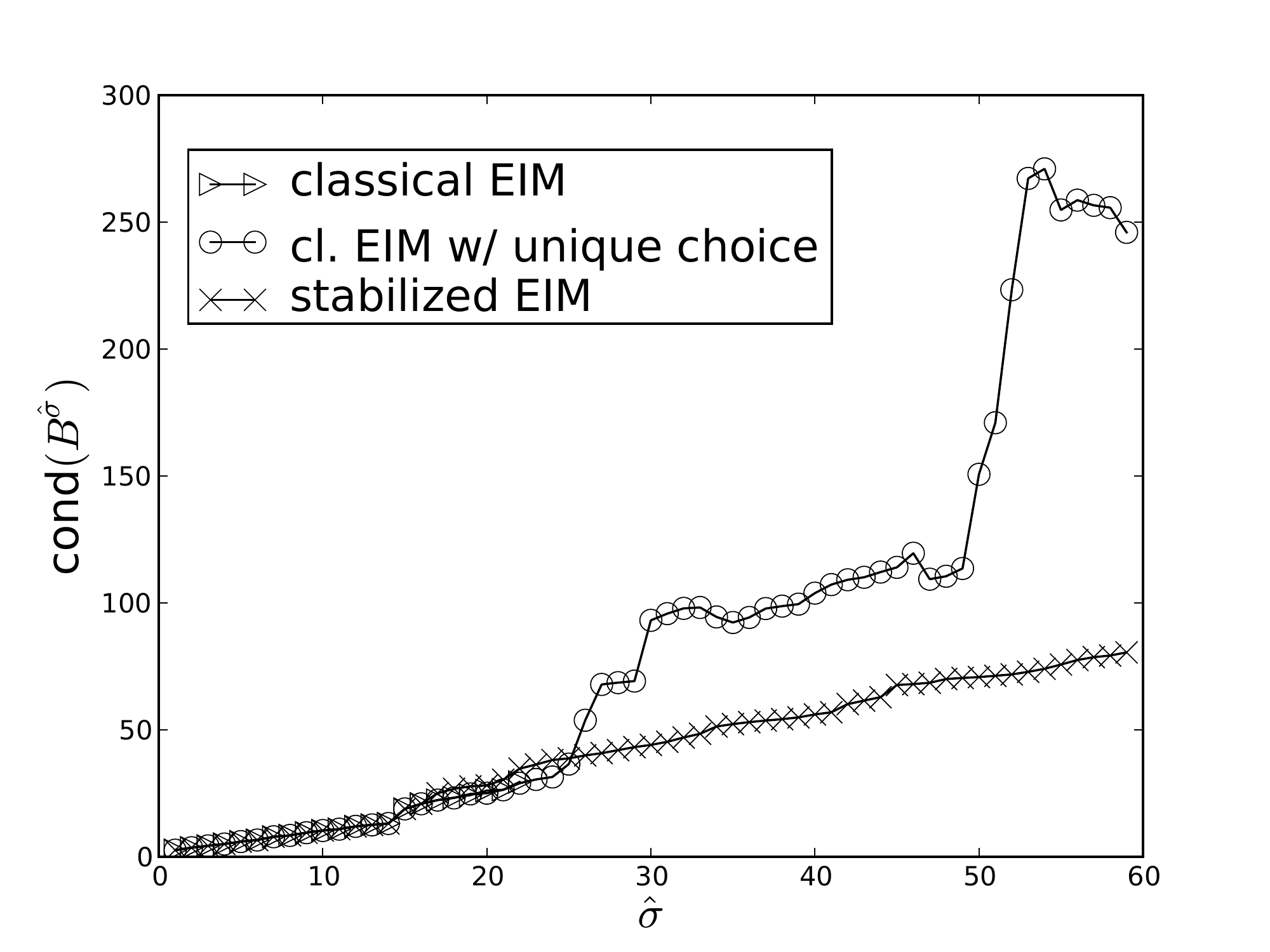}}
\caption{Determinant (left) and condition number (right) of the matrix $B^{\hat{\sigma}}$ as a function of
$\hat{\sigma}$, for the classical EIM, the classical EIM with unique choice, and the stabilized EIM.
The classical EIM curves stop at $21$ interpolation points since $B^{\hat{\sigma}}$ becomes non invertible at $22$ points.}
\label{fig:detcond}
\end{figure}

\begin{remark}[Computational cost and variant of stabilized EIM]
The computational cost of the stabilized EIM is more than that of the classical EIM, since the stabilized residual
requires as many calls to a classical residual as the number of selected interpolation points (i.e. the scaling with $\hat{\sigma}$
is $\hat{\sigma}^2$ for the stabilized EIM as opposed to $\hat{\sigma}$ for the classical EIM). 
One can think of a cheaper procedure by monitoring $\textnormal{det}(B^{\hat{\sigma}})$ and adding some intermediate residuals
$\delta^{k,j}_{\rm stab}$ until $\textnormal{det}(B^{\hat{\sigma}})$ is close enough to $1$.
\end{remark}

\subsection{Summary}

The advantages and drawbacks of the four considered formulae for computing the error bound are summarized in Table \ref{tab3}.
To estimate the computational complexity of the methods, we keep only the leading order in operation count.
We denote the complexity of the resolution
of \eqref{eq:compute_G} by $N_{\rm sol}$. The linear systems of size $\sigma$, $\hat\sigma$, and $\hat{N}$ are supposed to be solved
by a direct solver, hence with complexity proportional to $\sigma^3$, $\hat\sigma^3$, and $\hat{N}^3$, respectively.
For the offline stage of $\mathcal{E}_2$ and
$\mathcal{E}_3$, we have to evaluate respectively $d\hat{N}+1$ and $\sigma$ times the functional $G_\mu$, which requires to
solve \eqref{eq:compute_G}. For the offline stage of $\mathcal{E}_4$, let $M$ denote the cardinality of
$\mathcal{P}_{\rm trial}$. The $k$-loop in Algorithm \ref{algo2} requires at each step to compute a maximum over $\sigma$ different
$\ell^\infty(\mathcal{P}_{\rm trial})$ norms, and then to solve a linear system of size $k$, leading to a complexity
of $\hat{\sigma}^4 \sigma M+\hat\sigma N_{\rm sol}$. If the stabilized EIM is used instead for $\mathcal{E}_4$, each residual evaluation
in the $k$-loop
requires solving $k$ linear systems of size $1$ to $k$, leading to a complexity of $\hat{\sigma}^5 \sigma M+\hat\sigma N_{\rm sol}$.
For the online stage, all the formulae require to solve the problem $\hat{E}_\mu$ of size $\hat{N}$. Moreover,
$\mathcal{E}_2$ additionally requires a linear combination of size $\sigma$, whereas
$\mathcal{E}_3$ and $\mathcal{E}_4$ require to solve a linear system of size $\sigma$ and $\hat{\sigma}$ respectively.
We notice that if $N_{\rm sol} \gg \hat{\sigma}^4\sigma M$ and $\hat{\sigma} < d\hat{N}+1$, then the offline stage of $\mathcal{E}_4$
with stabilized EIM requires less precomputations than the offline stage of $\mathcal{E}_2$.

\renewcommand{\arraystretch}{1.5}
\begin{table}
\begin{tabular}{|c |c |c |c |c|}
 \hline
Property & $\mathcal{E}_1$ & $\mathcal{E}_2$ & $\mathcal{E}_3$ & $\mathcal{E}_4$\\
 \hline
\small Online efficient & No & Yes & Yes & Yes \\
 \hline
\small Unconditionally well-posed & Yes & Yes & No & Yes \\
 \hline
\small $\epsilon$-dependence of the accuracy & $\epsilon$ & $\sqrt{\epsilon}$ & $\epsilon$, if well-posed & $\epsilon$ \\
 \hline
\multirow{2}*{\small Equals $\mathcal{E}_1$ in exact arithmetics} & \multirow{2}*{--} & \multirow{2}*{Yes} & \multirow{2}*{Yes} & Yes, if $\hat{\sigma}=\sigma$\\[-3pt]
& & & & No, if $\hat{\sigma}<\sigma$\\
 \hline
\multirow{2}*{\small Complexity of the offline stage} & \multirow{2}*{--} & \multirow{2}*{$(d\hat{N}+1) N_{\rm sol}$} & \multirow{2}*{$\sigma N_{\rm sol}$} & $\hat{\sigma}^4 \sigma M + \hat{\sigma}N_{\rm sol}$ {\small with classical EIM }\\[-3pt]
& & & & $\hat{\sigma}^5 \sigma M + \hat{\sigma}N_{\rm sol}$ {\small with stabilized EIM}\\
 \hline
\small Complexity of the online stage & -- & $\hat{N}^3+\sigma$ & $\hat{N}^3+\sigma^3$ & $\hat{N}^3+\hat{\sigma}^3$ \\
 \hline
\end{tabular}
\vspace{0.2cm}
\caption{\label{tab3} Comparison of the considered formulae for computing the error bound.}
\end{table}

\section{Application to a three-dimensional acoustic scattering problem}
\label{sec:acoustics}

\subsection{Formulation of the problem}

We consider a ball $\Omega^i\subset\mathbb{R}^3$ with boundary $\Gamma$ and $\Omega^e:=\mathbb{R}^3\backslash\overline{\Omega^i}$,
see Figure \ref{fig:geo}. We consider a monopole source located in $\Omega^e$. The surface of the ball is impedant,
meaning that any incident wave will be partially absorbed and partially scattered. The proportion of absorbed and scattered
parts is quantified by the impedance coefficient $\mu$, which is used in a Robin boundary condition at $\Gamma$.
We are interested in the computation of the scattered field $p_{\rm sc}$ in $\Omega^e$. We denote
$p_{\rm inc}$ the known pressure field created by the source in the absence of the sphere;
the total acoustic field in $\Omega^e$ is the sum of $p_{\rm inc}$ and $p_{\rm sc}$.

\begin{figure}[ht]
	\centering
	\includegraphics [width=0.65\textwidth] {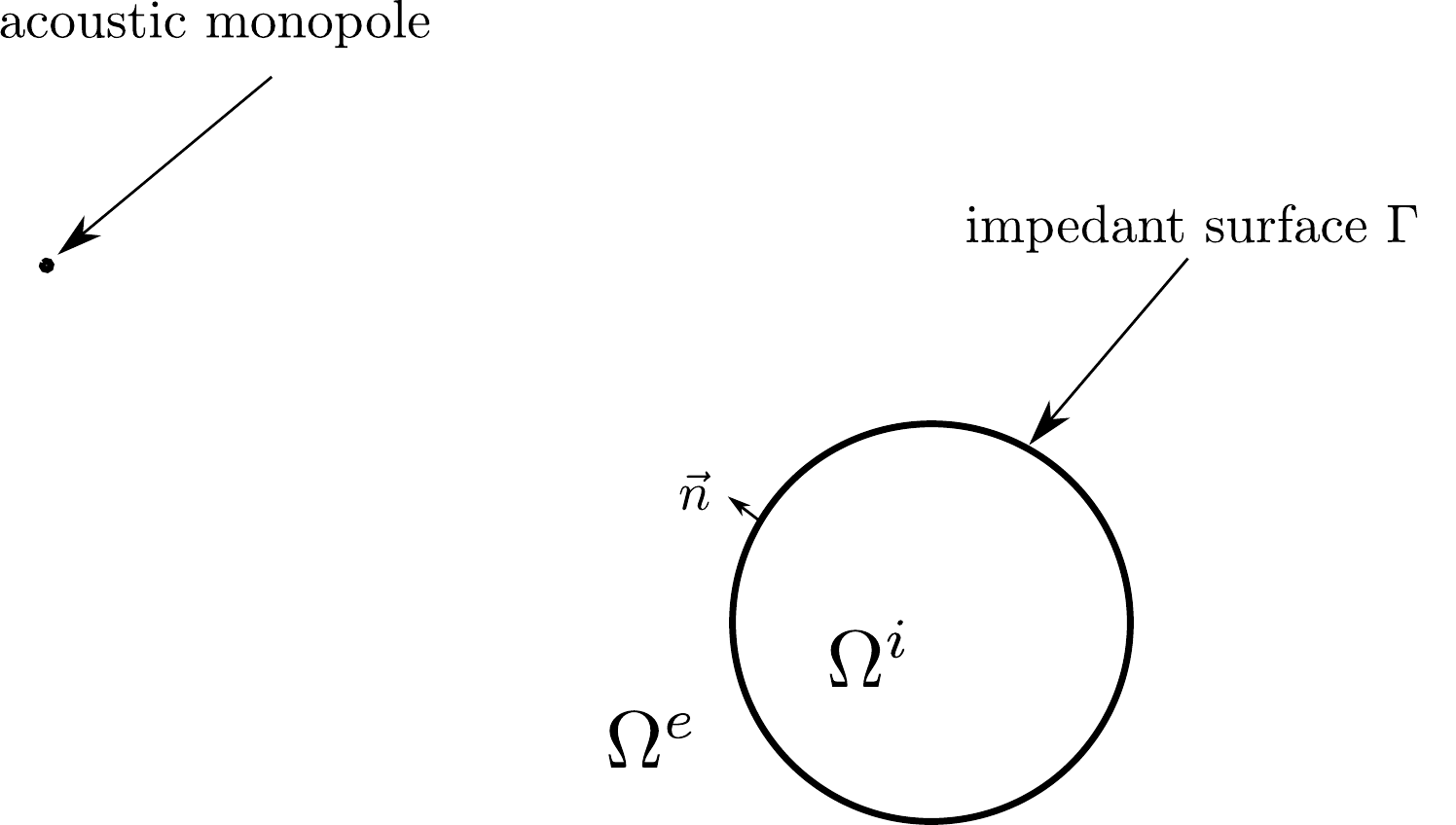}
	\caption{Geometry for the three-dimensional acoustic scattering problem}
\label{fig:geo}
\end{figure}

We define the distribution $v:\Omega^e\cup\Omega^i\longrightarrow\mathbb{C}$ such that $v_{|\Omega^i}=-p_{\rm inc}$,
$v_{|\Omega^e}=p_{\rm sc}$. We denote $\lambda$ and $\chi$ the jumps
of the Neumann and Dirichlet traces of $v$ across $\Gamma$. The Robin boundary condition writes
$\lambda+\frac{ik}{\mu}\chi=0$. Since $v$ solves the homogeneous Helmholtz equation in $\Omega^e$
and in $\Omega^i$ and satisfies the Sommerfeld radiation condition at infinity, there holds
\begin{equation}
\label{eq:form_rep}
v=-\mathcal{S}\lambda + \mathcal{D}\chi\textnormal{~~~in }\Omega^{e}\cup\Omega^{i},
\end{equation}
where $\mathcal{S}$ and $\mathcal{D}$ are respectively the single- and double-layer potentials.
Taking the interior Dirichlet and Neumann traces of $v$ in equation \eqref{eq:form_rep} and injecting the Robin boundary condition,
we obtain
\begin{equation}
\label{eq:acous1}
\left[
\begin{array}{cc}
 N-\frac{ik}{2\mu}I & \tilde{D}\\
 D & -S-\frac{i\mu}{2k}I
\end{array}
\right]\left[
\begin{array}{cc}
\chi\\
\lambda
\end{array}\right]=\left[
\begin{array}{cc}
\gamma_1^{-} p_{\text{inc}}\\
-\gamma_0^{-} p_{\text{inc}}
\end{array}
\right],
\end{equation}
where $k$ is the wave number of the monopole source, $N$, $\tilde{D}$, $D$ and $S$ are classical boundary integral operators (see \cite{sauter}),
and $\gamma_0^{-} p_{\text{inc}}$ and $\gamma_1^{-} p_{\text{inc}}$ are respectively the interior Dirichlet and
Neumann traces of the known function $p_{\text{inc}}$.
Solving one of these two equations, together with the Robin boundary condition, is sufficient. The software we are using,
ACTIPOLE (see \cite{actipole2,actipole1}), deals with the block system defined in \eqref{eq:acous1}, which presents the
advantage of being invertible for all frequencies of the source when the surface  $\Gamma$ is Lipschitz.
We denote $A_\mu$ the block operator defined
by the left-hand side of \eqref{eq:acous1}. From \cite{Wendland, mclean, sauter}, we infer that $A_\mu$ is a bounded bijective operator from $H^{\frac{1}{2}}(\Gamma)\times
L^2(\Gamma)$ into $H^{-\frac{1}{2}}(\Gamma)\times L^2(\Gamma)$ (see also \cite{Casenave_phd}). The variational form is as follows:
find $\left(\chi,\lambda\right)\in H^\frac{1}{2}(\Gamma)\times L^2(\Gamma)$
such that for all $(\hat{\chi},\hat{\lambda})\in H^\frac{1}{2}(\Gamma)\times L^2(\Gamma)$,
\begin{equation}
\label{eq:varf}
 \left\{
\begin{aligned}
\left(N\chi-\frac{ik}{2\mu}\chi, \hat{\chi}\right) + \left(\tilde{D}\lambda, \hat{\chi}\right)&= \left(\gamma_1 p_{\text{inc}}, \hat{\chi}\right),\\
\left<\hat{\lambda},D \chi\right> - \left<\hat{\lambda},S\lambda +\frac{i\mu}{2k}\lambda\right> &= -\left<\hat{\lambda},\gamma_0 p_{\text{inc}}\right>,
\end{aligned}
\right.
\end{equation}
where $(\cdot,\cdot)$ denotes the $H^\frac{1}{2}(\Gamma)\times H^{-\frac{1}{2}}(\Gamma)$ duality product and
$<\cdot,\cdot>$ denotes the $L^2(\Gamma)$ inner product.

Let $\mathcal{M}$ be a shape-regular triangular mesh of $\Gamma$ with meshsize $h$, and
let $V^1_h$ and $V^0_h$ be respectively the spaces spanned by continuous piecewise affine polynomials on $\mathcal{M}$ and
piecewise constant polynomials on $\mathcal{M}$.
Let $(\phi_i)_{1\leq i\leq P}$ and $(\psi_j)_{1\leq j\leq P'}$ be the usual bases of $V^1_h$ and $V^0_h$ of size $P$ and $P'$, respectively.
The product space $V^1_h\times V^0_h$ is a conforming approximation of $H^\frac{1}{2}(\Gamma)\times L^2(\Gamma)$.
The discrete problem is derived from a Galerkin procedure on $V^1_h\times V^0_h$ using the boundary element
method (BEM). From \cite{Wendland}, the obtained discrete approximation of the problem \eqref{eq:varf} is inf-sup stable for
$h$ small enough (see also \cite{Casenave_phd}). A direct solver is used, in double-precision format.

\subsection{Application of the RB method}

The RB method has recently been applied to problems solved by means of integral equations in electromagnetism,
see \cite{Stamm, Brown}. In these works, the classical a posteriori error bounds were used.
We are here interested in the application of our improved a posteriori error bounds to such problems.
We take as parameter for the RB method the value of the impedance $\mu$, which is supposed here to be a positive real number.
To recover an affine dependence on the parameter $\mu$, we write the BEM
matrix in the form $A_\mu=a_1(\mu)A_1+a_2(\mu)A_2+a_3(\mu)A_3$, so that $d=3$ in the affine decomposition \eqref{eq:affine_assump}
with $a_1(\mu)=1$, $a_2(\mu)=\frac{1}{\mu}$ and $a_3(\mu)={\mu}$. Specifically,
\begin{equation}
A_1=\left[
\begin{array}{c|c}
\left(N \phi_i,\phi_j\right)_{\indices{i}{P}{j}{P}} & 
\left(\tilde{D} \psi_j,\phi_i\right)_{\indices{i}{P}{j}{P'}} \\
\hline
\left<D \phi_j,\psi_i\right>_{\indices{i}{P'}{j}{P}} &
\left<-S \psi_i,\psi_j\right>_{\indices{i}{P'}{j}{P'}}
\end{array}
\right],
\end{equation}
\begin{equation}
A_2=\left[
\begin{array}{c|c}
-\frac{ik}{2}\left(\phi_i,\phi_j\right)_{\indices{i}{P}{j}{P}} & 
\left(0\right)_{\indices{i}{P}{j}{P'}} \\
\hline
\left(0\right)_{\indices{i}{P'}{j}{P}} &
\left(0\right)_{\indices{i}{P'}{j}{P'}}
\end{array}
\right],\quad
A_3=\left[
\begin{array}{c|c}
\left(0\right)_{\indices{i}{P}{j}{P}} & 
\left(0\right)_{\indices{i}{P}{j}{P'}} \\
\hline
\left(0\right)_{\indices{i}{P'}{j}{P'}} &
-\frac{i}{2k}\left<\psi_i,\psi_j\right>_{\indices{i}{P'}{j}{P'}}
\end{array}
\right].
\end{equation}

In the general-purpose RB, the quantity of interest is the pair of potentials $(\chi,\lambda)$ on $\Gamma$.
For the goal-oriented case, we consider the value of the pressure at a given point in $\Omega^e$. If this point is far
enough from $\Gamma$, approximations can be made in the representation formula for the pressure. This is the far-field
approximation, which consists in a linear form $Q$ acting on the solution pair $(\chi,\lambda)$ as
\begin{equation}
Q(\chi,\lambda)=
\left(
\begin{aligned}
-&ik\frac{e^{-ik\|x\|_2}}{4\pi\|x\|_2}\left(e^{-iky\cdot\frac{x}{\|x\|_2}}\frac{x}{\|x\|_2}
\cdot n(y),\chi(y)\right)\\
&ik\frac{e^{-ik\|x\|_2}}{4\pi\|x\|_2}\int_{\Gamma}\left(e^{-iky\cdot\frac{x}{\|x\|_2}},\lambda(y)\right)
\end{aligned}
\right)\in\mathbb{C}^2.
\end{equation}
For simplicity, we take the Euclidian norm of
vectors in $\mathbb{C}^{P+P'}$ instead of the $H^{\frac{1}{2}}(\Gamma)\times L^2(\Gamma)$ norms of the reconstructed functions.
This way, the Riesz isomorphism $J$ is simply the identity. Therefore, the computation of the terms
$G_{\mu} u_{\mu}$, as well as that of the terms $G_k u_i$, does not require to invert the stiffness matrix as in \eqref{eq:compute_G}.
The Successive Constraint Method is used to compute a lower bound of the inf-sup constant, which
is around $10^{-6}$ in the present examples.

We define two test cases:
(i) one impedant sphere ($d=3$), with $N=584$ and $\mu\in\mathcal{P}:=[0.9,1.1]$,
(ii) two impedant spheres ($d=5$), with $N=1561$ and $\mu\in\mathcal{P}:=[0.99,1.01]^2$.
We present visualizations of the scattered pressure field, at a random value of the parameter $\mu$, for test case
(i) with $\#\mathcal{P}_{\textnormal{trial}}=100$ and $\hat{N}=10$ in Figure \ref{fig:comp1sph} and for test case
(ii) with $\#\mathcal{P}_{\textnormal{trial}}=225$ and $\hat{N}=10$ in Figure \ref{fig:comp2sph}.

\begin{figure}[h!]
\centering
	\includegraphics[width=0.48\textwidth]{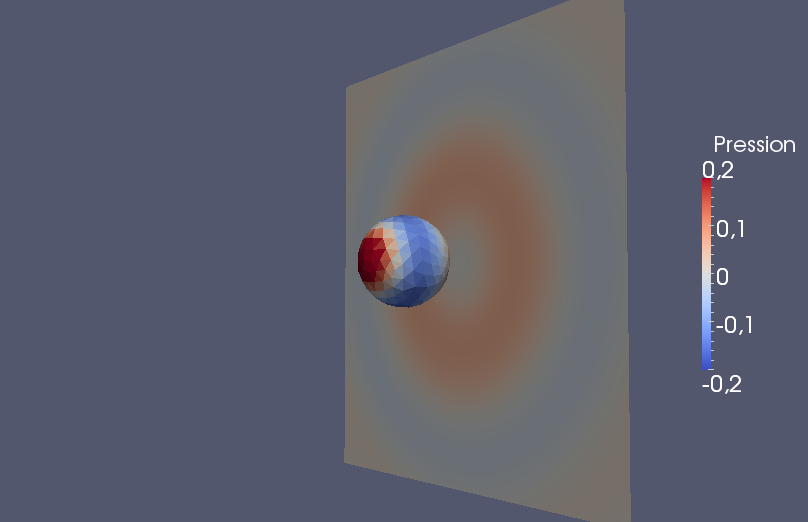}~
	\includegraphics[width=0.48\textwidth]{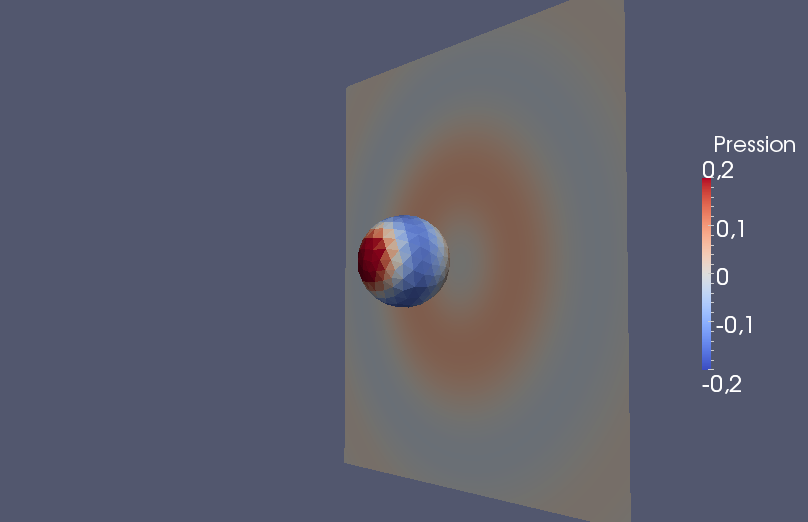}
	\caption{Real part of the pressure field for the BEM solution (left) and the RB solution (right), with a
basis of size 10. The difference between the two fields is less than $10^{-15}$ in infinity norm.}
\label{fig:comp1sph}
\end{figure}

\begin{figure}[h!]
\centering
	\includegraphics[width=0.48\textwidth]{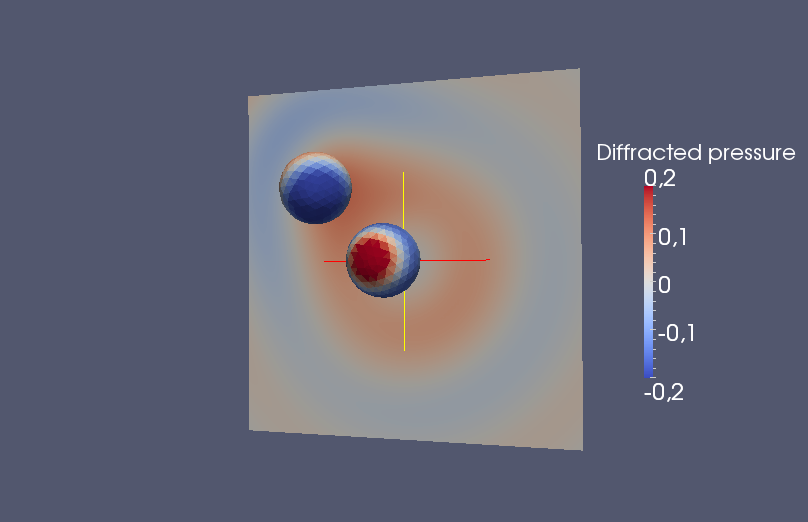}~
	\includegraphics[width=0.48\textwidth]{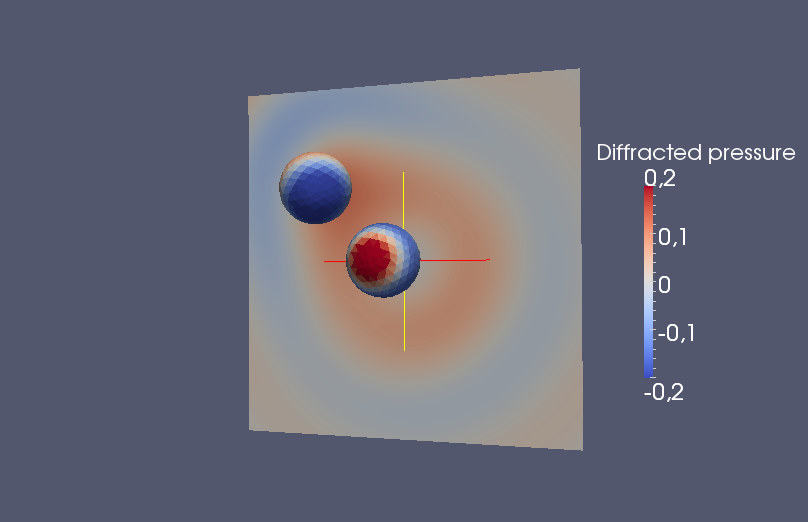}
	\caption{Real part of the pressure field for the BEM solution (left) and the RB solution (right), with a
basis of size 10. The difference between the two fields is less than $10^{-15}$ in infinity norm.}
\label{fig:comp2sph}
\end{figure}

\subsection{Error bound curves}

We present the error bound curves for test case
(i) with a general-purpose RB, $\#\mathcal{P}_{\textnormal{trial}}=100$, $(\hat{N},\hat{\sigma},\sigma)=(2,7,49),(3,10,100),(4,20,169)$,
and $(5,30,256)$ in Figure \ref{fig:fig1} and for test case (ii) with a goal-oriented RB,
$\#\mathcal{P}_{\textnormal{trial}}=225$, $\hat{N}=8$, $\hat{\sigma}=60$, and $\sigma=1681$
in Figure \ref{fig:fig2}.

\begin{figure}[h!]
	\centering
	\includegraphics [width=7.5cm] {err_est2.pdf}
	\includegraphics [width=7.5cm] {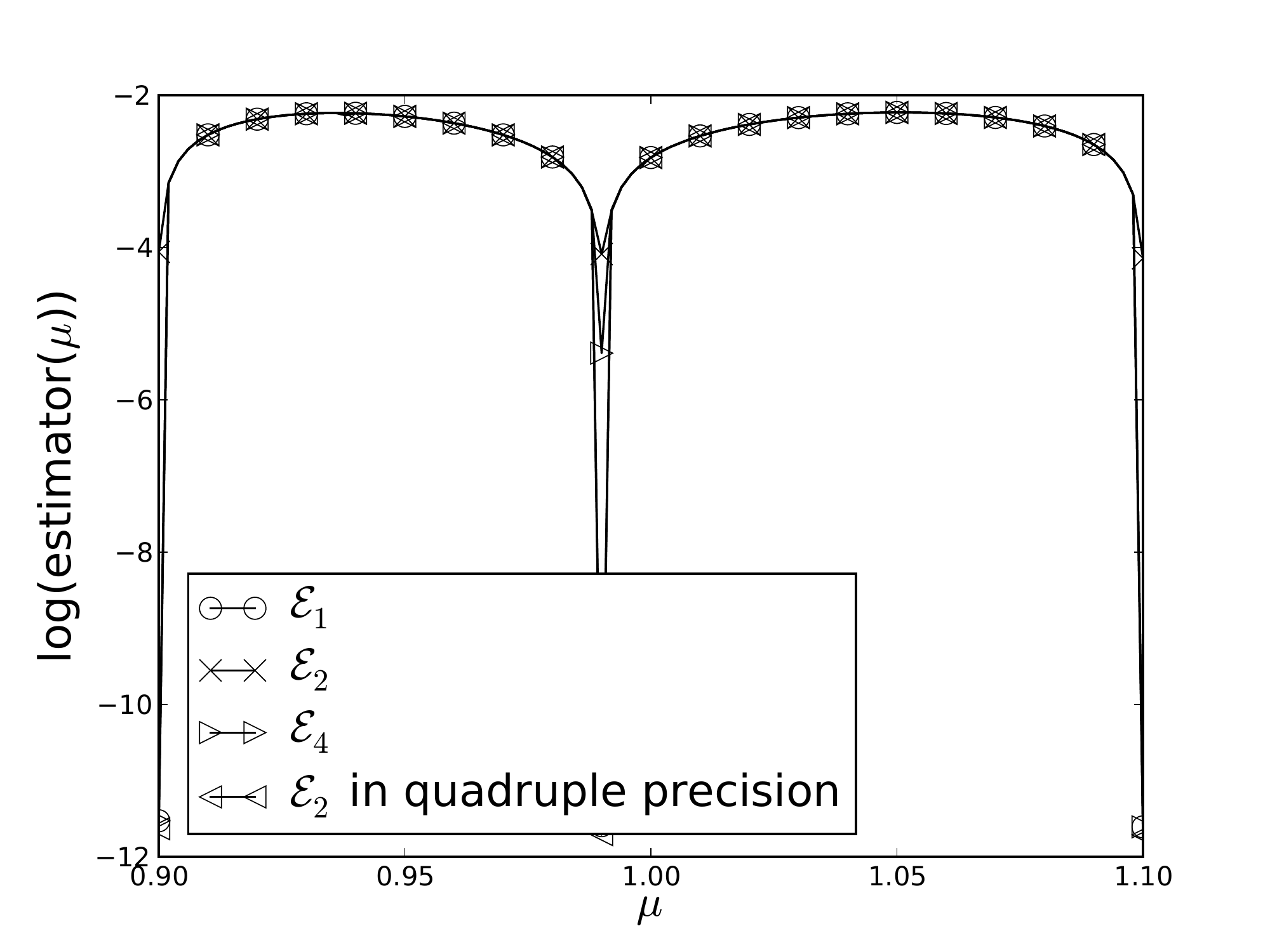}\\
	\includegraphics [width=7.5cm] {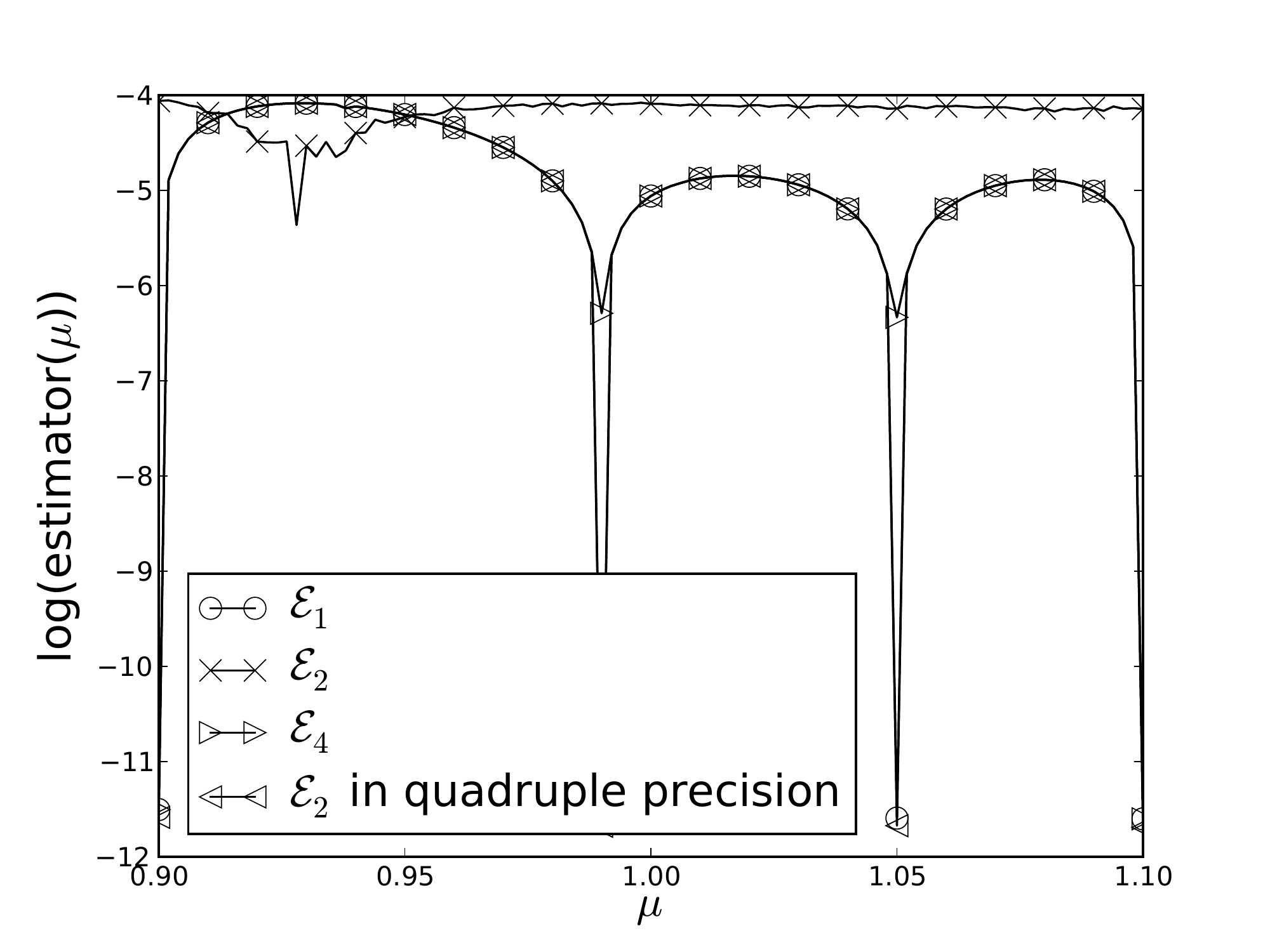}
	\includegraphics [width=7.5cm] {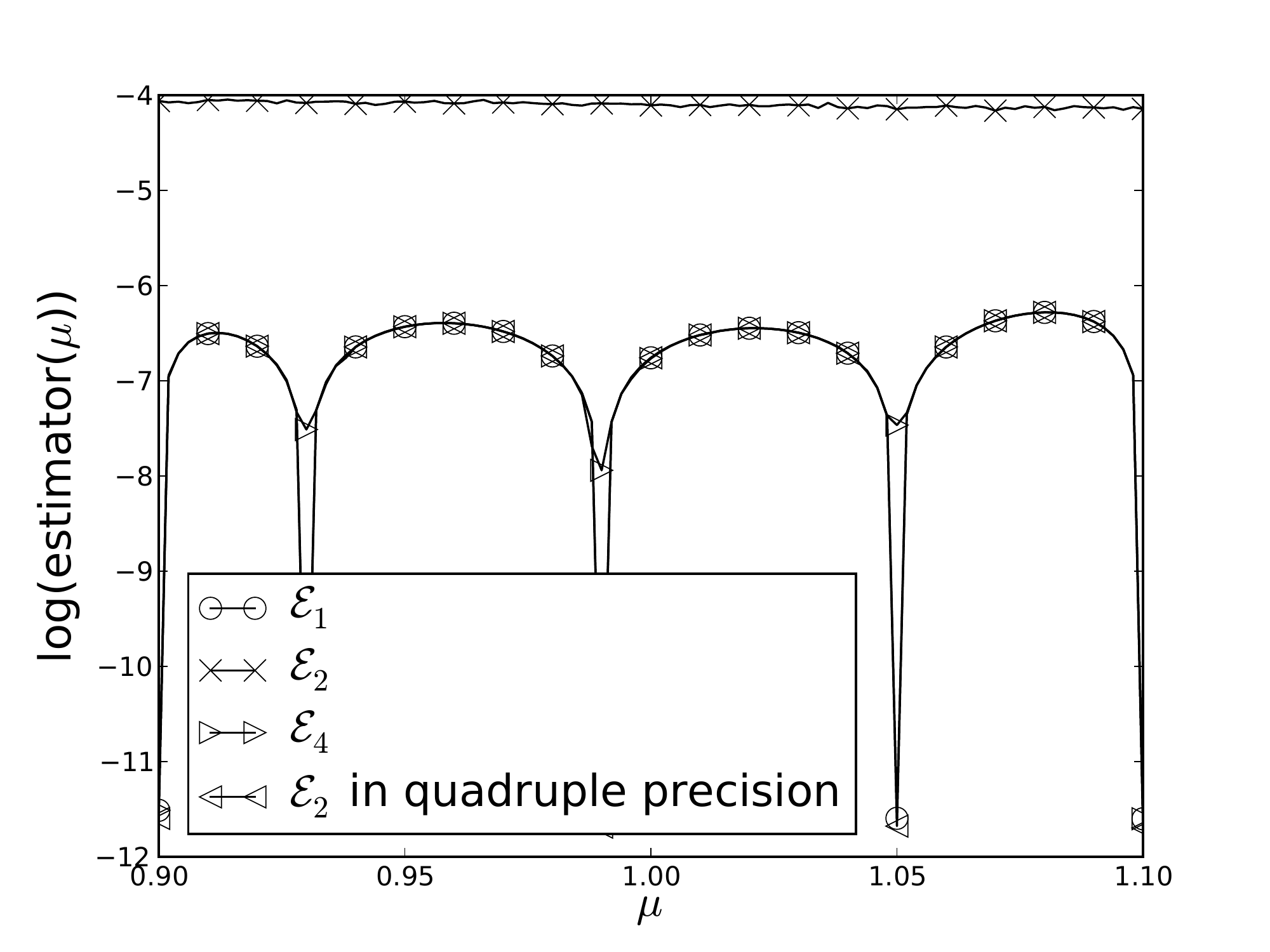}
	\caption{Error bound curves with respect to the impedance coefficient, with $\hat{N}$ equal to
$2$, $3$, $4$, and $5$ (from left to right and top to bottom). The curve for $\mathcal{E}_2$ computed in quadruple precision superimposes to $\mathcal{E}_1$.}
\label{fig:fig1}
\end{figure}

\begin{figure}[h!]
	\centering
	a)\includegraphics [width=7cm] {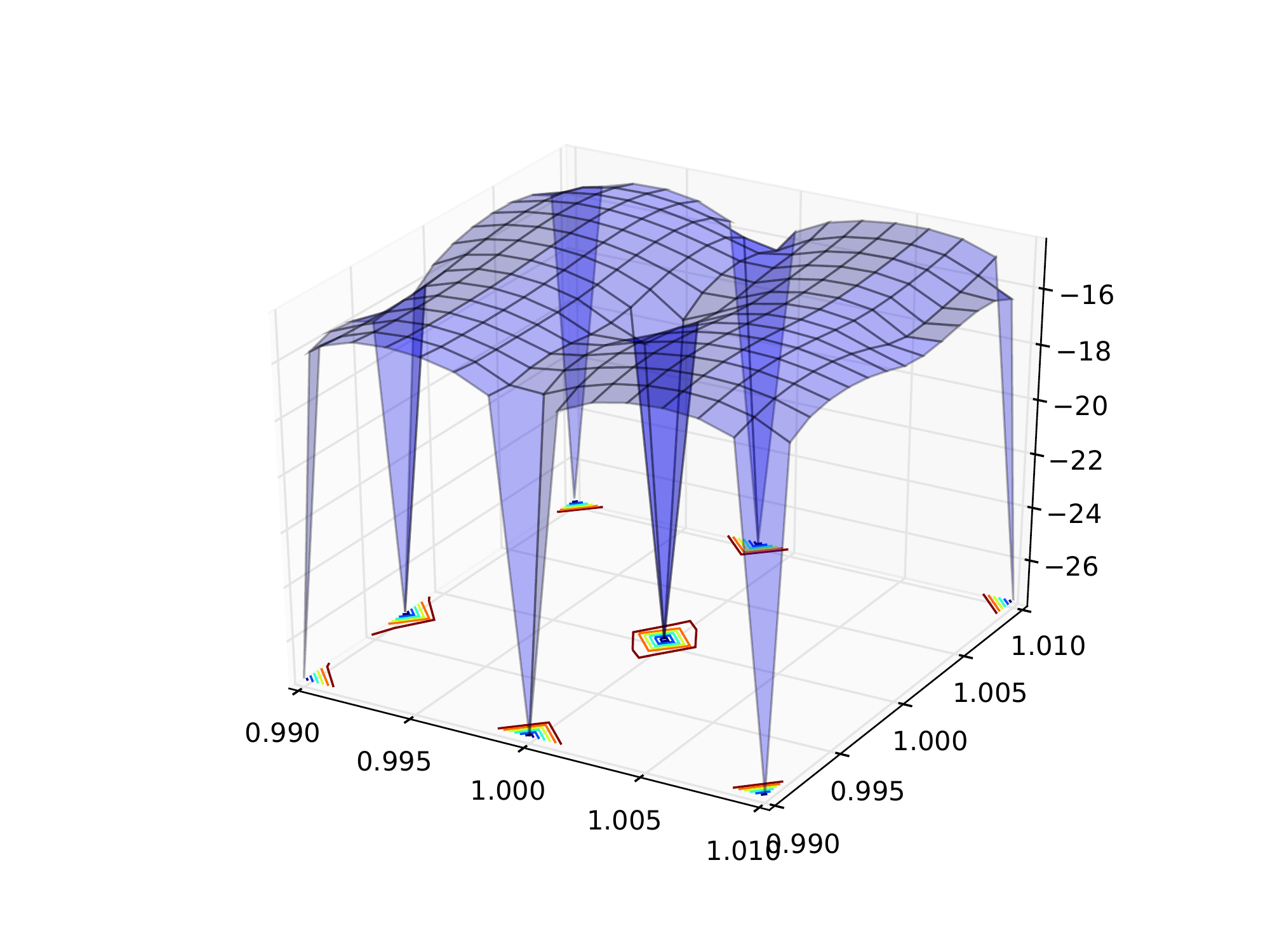}
	b)\includegraphics [width=7cm] {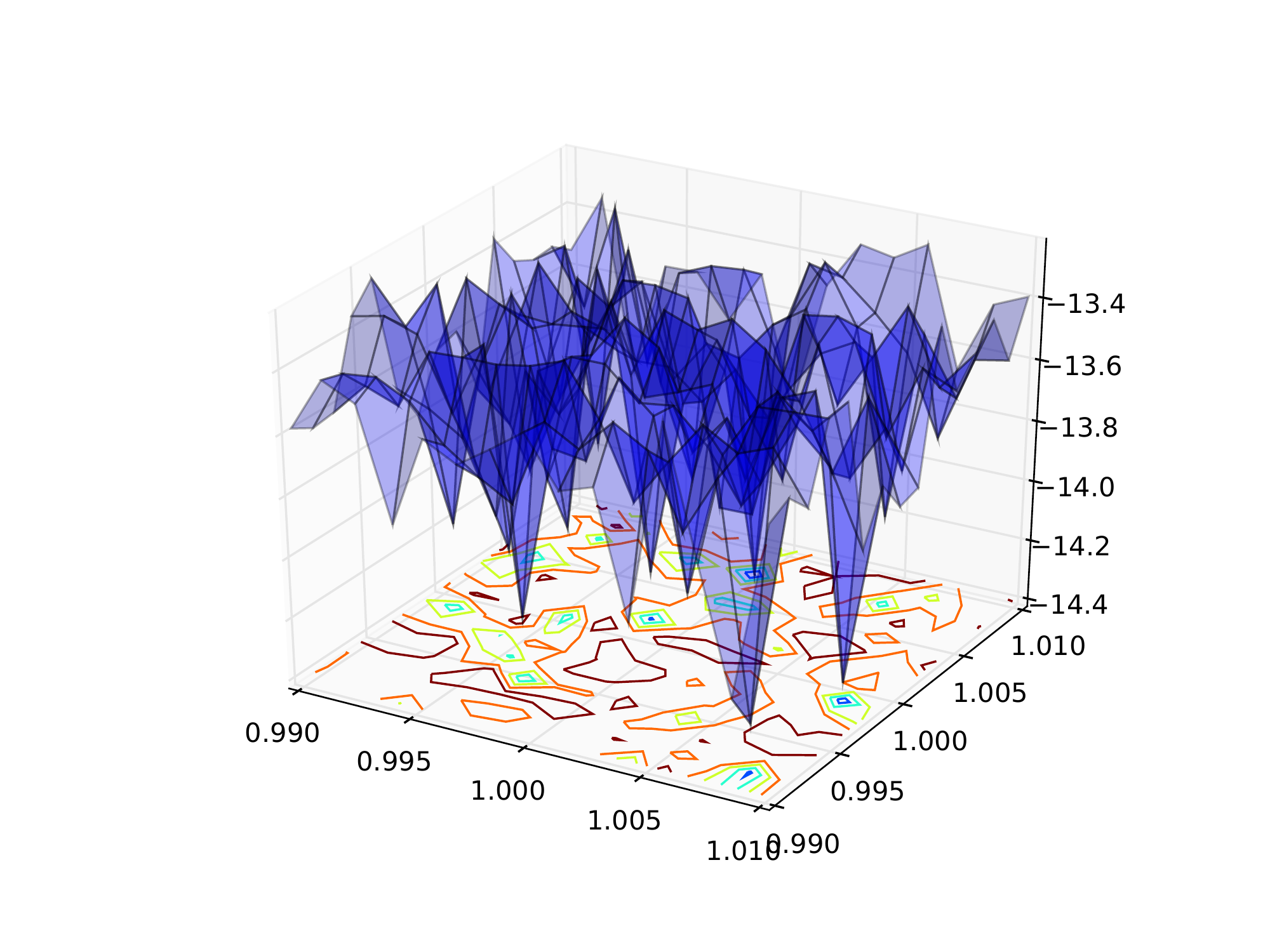}
	c)\includegraphics [width=7cm] {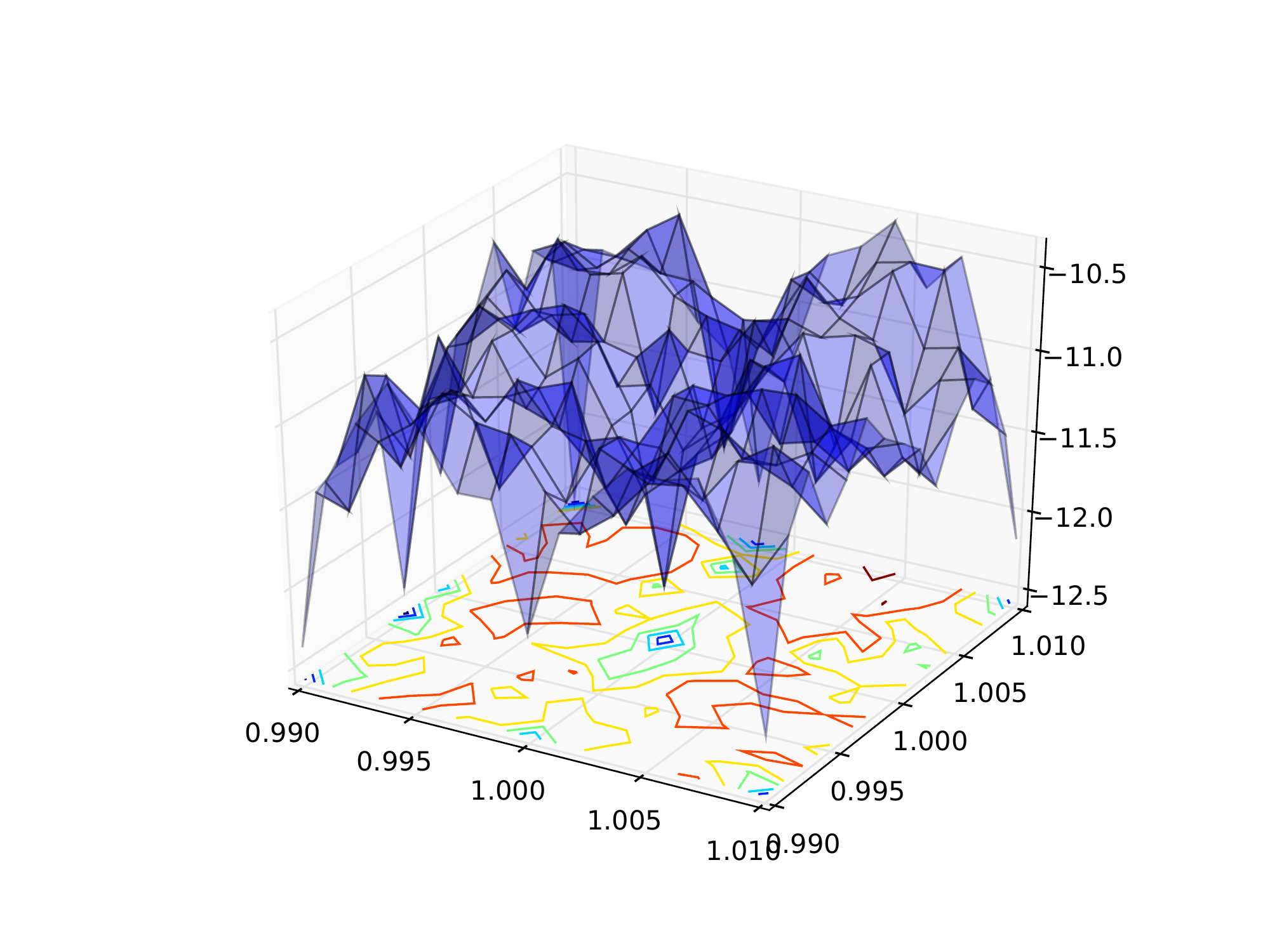}
	d)\includegraphics [width=7cm] {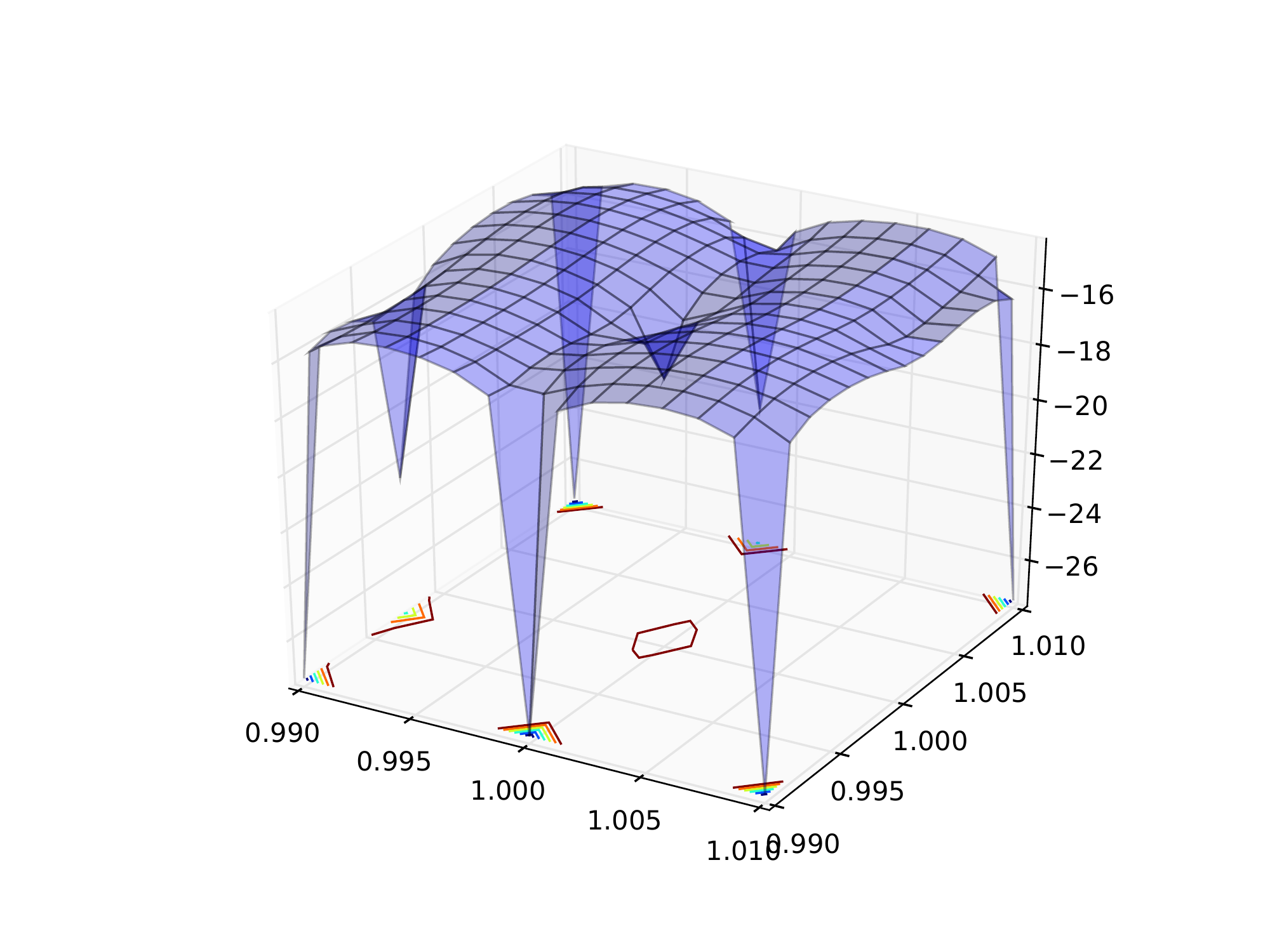}
	e)\includegraphics [width=7cm] {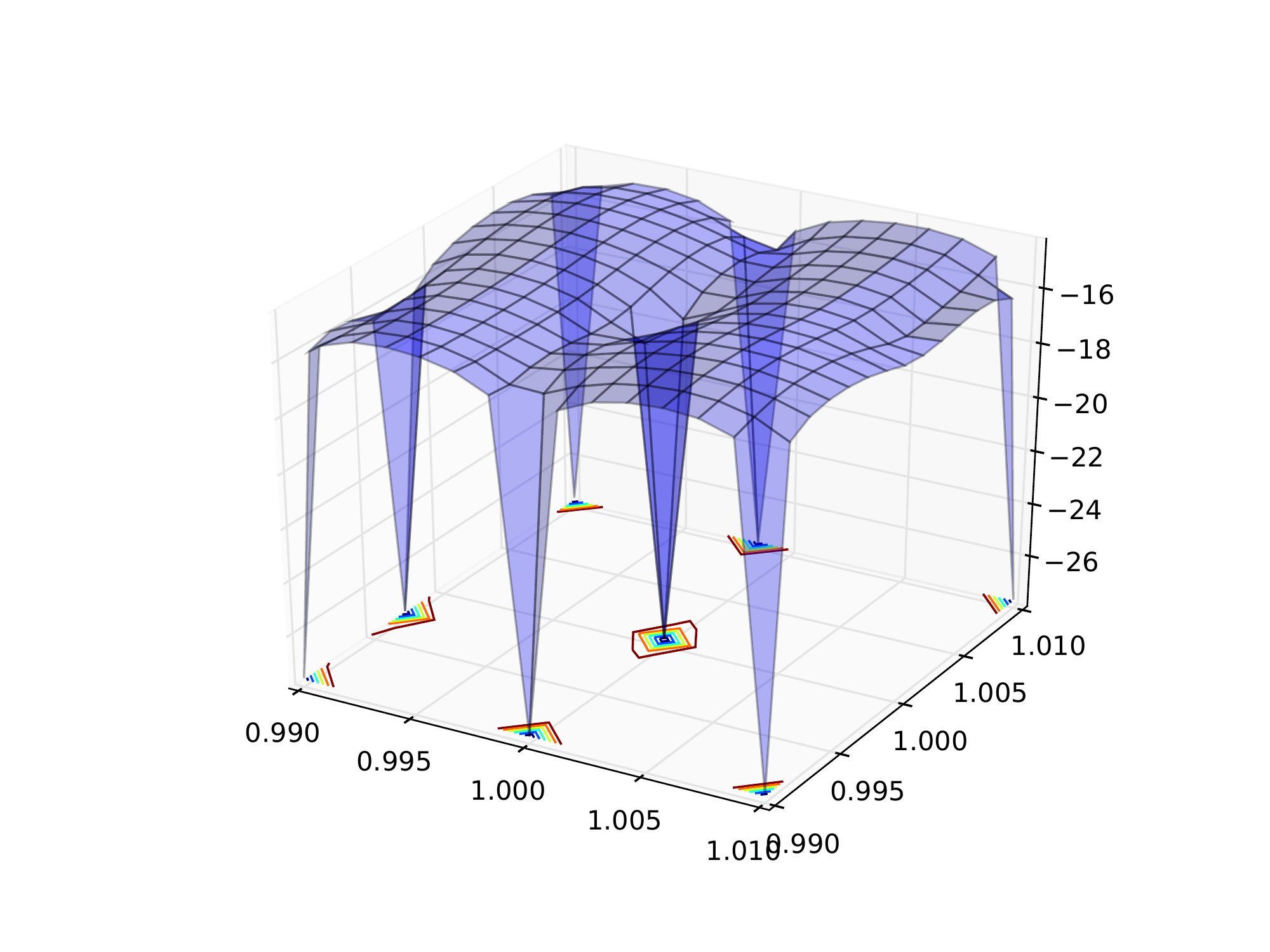}
	\caption{Error bound curves (logarithmic scale) as a function of the impedance coefficients: a) $\mathcal{E}_1$, b)
 $\mathcal{E}_2$, c) $\mathcal{E}_3$, d) $\mathcal{E}_4$, and e) $\mathcal{E}_2$ computed in quadruple precision.}
\label{fig:fig2}
\end{figure}

In test case (i), the classical formula $\mathcal{E}_2$ exhibits quite poor performances, since it cannot
compute values below $10^{-4}$. This is explained by the values of the inf-sup constant which are around $10^{-6}$.
Furthermore, in agreement with Remark \ref{direct_case}, the lowest computable values of $\mathcal{E}_1$ and $\mathcal{E}_2$
differ by $8$ orders of magnitude.
In test case (ii), the behavior of formula $\mathcal{E}_3$ is quite poor, and we do not observe the level of accuracy we observed
so far for $\mathcal{E}_3$. Here, the matrix $T$ defined in \eqref{eq:sysT} is so ill-conditioned that the numerical errors introduced
by its resolution are larger than the ones introduced by the formula $\mathcal{E}_2$.
Furthermore, the formula $\mathcal{E}_4$ exhibits, as before, a very good performance.
We see in Figure \ref{fig:fig2} that $\underset{\mu\in\mathcal{P}_{\rm select}}{\textnormal{argmax}}\left(\mathcal{E}_4(\mu)\right)=(1,1)$
and $\mathcal{E}_4(1,1)\approx 10^{-16}$; therefore, the formula $\mathcal{E}_4$ with
$\hat{\sigma}=60$ is valid for computing the error bound in Algorithm 1 with $\rm tol=10^{-16}$.

The behavior of $\mathcal{E}_4$ when $\hat{\sigma}$
increases is investigated in Figure \ref{fig:fig1bis} for test case (i). We consider the values $\hat{\sigma}=14,30,40$ and $50$.
These four values lead to the same local maxima, and increasing $\hat{\sigma}$ allows the formula $\mathcal{E}_4$ to be valid for smaller
tolerances (respectively $5\times 10^{-8}$, $10^{-8}$, $8\times 10^{-9}$ and $2\times 10^{-9}$).
Another interesting observation comes from considering the fourth plot in Figure \ref{fig:fig1} and the first plot in Figure
\ref{fig:fig1bis}:
the classical formula $\mathcal{E}_2$ requires $16$ offline resolutions of \eqref{eq:compute_G} and stagnates at $10^{-4}$
while the formula $\mathcal{E}_4$ with $\hat\sigma = 14$ only requires $14$ offline resolutions of \eqref{eq:compute_G}
and is valid for tolerances down to $5\times 10^{-8}$. This shows that at least in some regimes, the new formula $\mathcal{E}_4$ is
valid for lower tolerances than the classical formula $\mathcal{E}_2$, and requires less precomputations.
However, contrary to $\mathcal{E}_2$, using $\mathcal{E}_4$ requires that all the quantities $V_r$ defined in $\eqref{eq:V_r}$ be
recomputed when adding a new vector to the reduced basis.

\begin{figure}[h!]
	\centering
	\includegraphics [width=7.5cm] {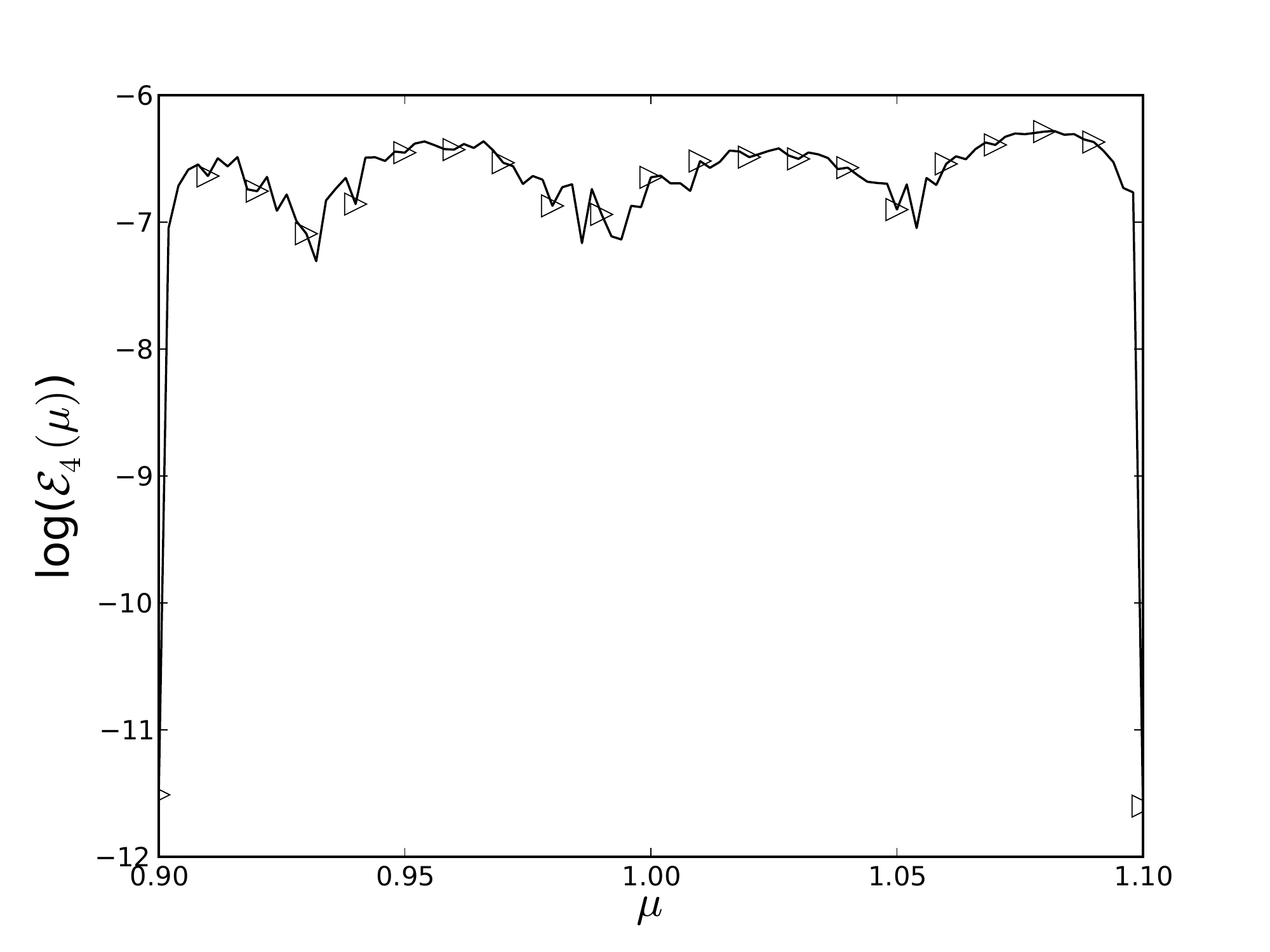}
	\includegraphics [width=7.5cm] {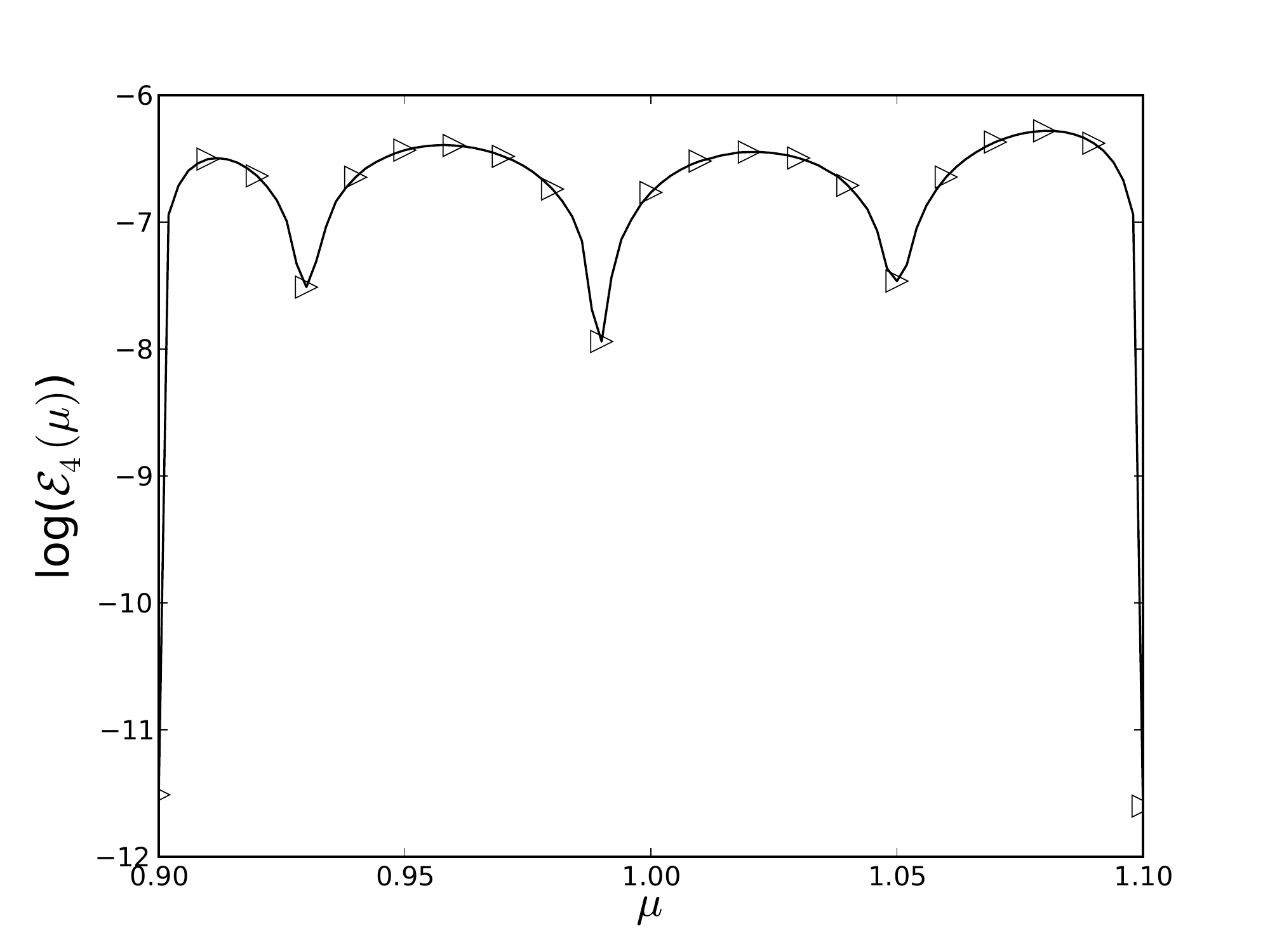}
	\includegraphics [width=7.5cm] {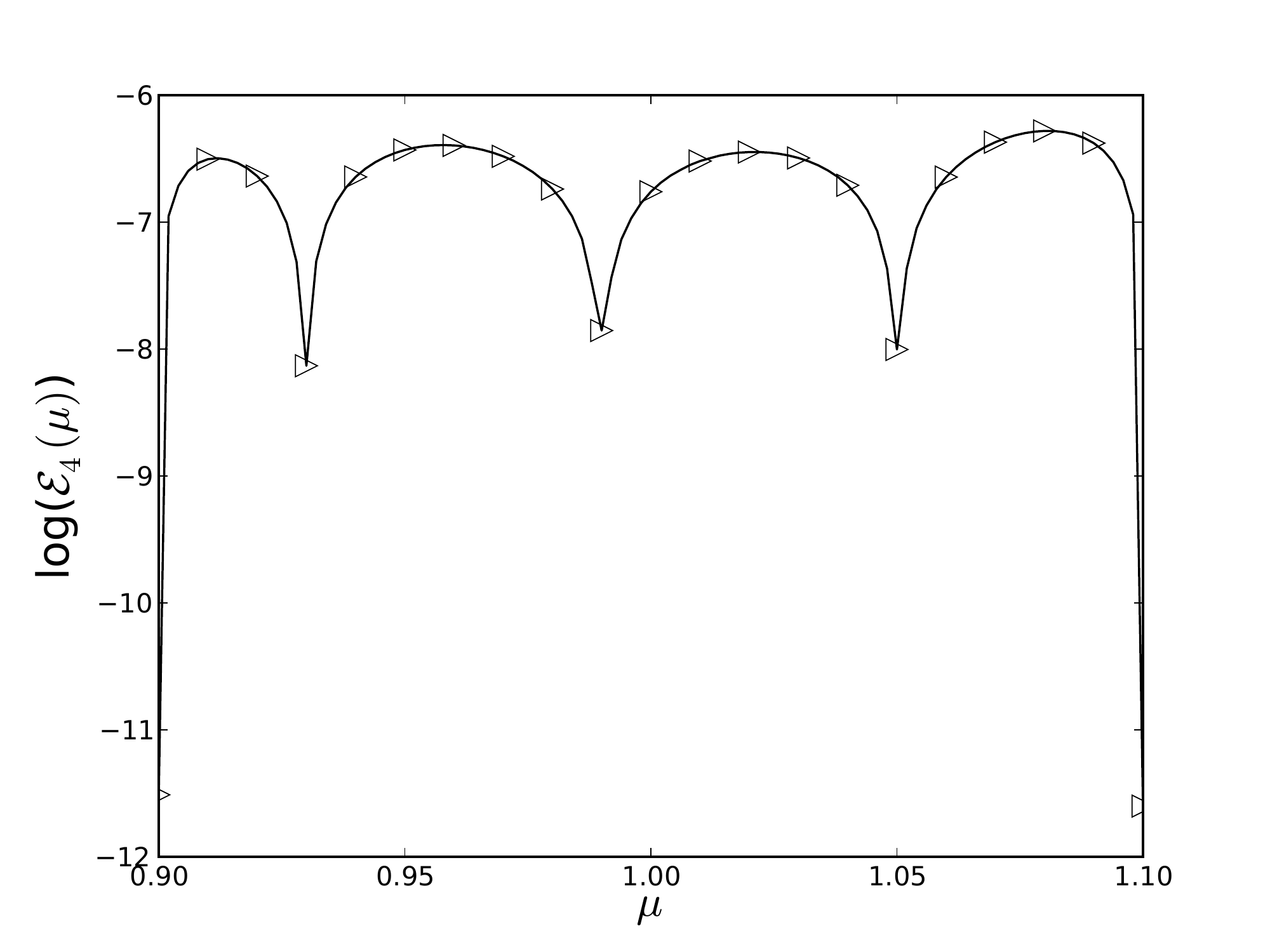}
	\includegraphics [width=7.5cm] {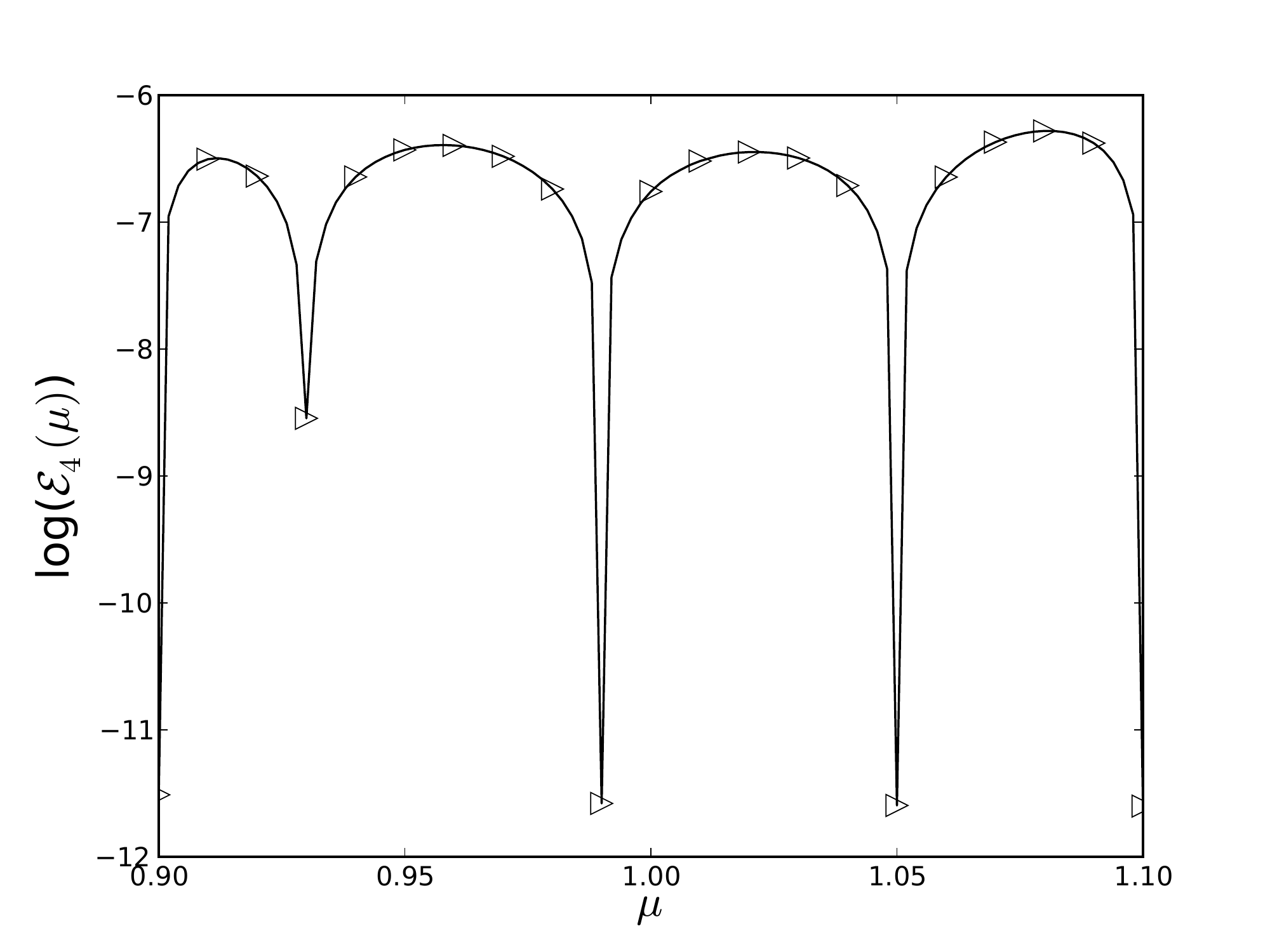}
	\caption{Error bound curve for $\mathcal{E}_4$ with respect to the impedance coefficient, with $\hat{N}=5$ and
$\hat{\sigma}$ equal to $14$, $30$, $40$, and $50$ (from left to right and top to bottom).}
\label{fig:fig1bis}
\end{figure}

\section*{Conclusion}
In this work, we have extended the ideas of \cite{casenave} by proposing a more stable numerical procedure, using
the empirical interpolation method, to represent the a posteriori error bound in the reduced basis method
as a linear combination of its values at given parameter values, called interpolation points.
Moreover, the proposed method provides a way of choosing the interpolation points, and yields better accuracy levels than the
classical a posteriori error bound and than the procedure proposed in \cite{casenave}.
Besides, our new procedure may require less precomputations than the classical a posteriori error bound.
The new error bound derived herein can be of particular interest in two situations:
(i) when the stability constant of the original problem is very small (this is the case in many practical problems),
(ii) when very accurate solutions are needed,
(iii) when considering a nonlinear problem (for which, in some cases, no error bound is possible until a very tight tolerance is reached, see~\cite{yano}).

\section*{Acknowledgement}
This work was supported by EADS IW. The authors wish to thank Anthony Patera for fruitful discussions.

\end{document}